\title{\Large\bf 
A presentation of the torus-equivariant \\[2mm]
quantum $K$-theory ring of flag manifolds of type $A$, \\[2mm]
Part I\hspace{-1.5pt}I: quantum double Grothendieck polynomials%
\footnote{Key words and phrases: (quantum) $K$-theory, 
(quantum) Schubert calculus, (quantum) double Grothendieck polynomials, 
semi-infinite flag manifold. \newline
2020 Mathematics Subject Classification: Primary 14N15, 14N35;
Secondary 14M15, 05E14, 05E05.}%
}
\author{%
Toshiaki Maeno \\
 \small Department of Mathematics, Faculty of Science and Technology, Meijo University, \\
 \small 1-501 Shiogamaguchi, Tempaku-ku, Nagoya 468-8502, Japan \\
 \small (e-mail: {\tt tmaeno@meijo-u.ac.jp}) \\[5mm]
Satoshi Naito \\ 
 \small Department of Mathematics, Tokyo Institute of Technology, \\
 \small 2-12-1 Oh-okayama, Meguro-ku, Tokyo 152-8551, Japan \\
 \small (e-mail: {\tt naito@math.titech.ac.jp}) \\[5mm]
and \\[3mm]
Daisuke Sagaki \\ 
 \small Department of Mathematics, \\
 \small Faculty of Pure and Applied Sciences, University of Tsukuba, \\
 \small 1-1-1 Tennodai, Tsukuba, Ibaraki 305-8571, Japan \\
 \small (e-mail: {\tt sagaki@math.tsukuba.ac.jp})
}
\date{}
\renewcommand\section{\@startsection{section}{1}{0pt}
{-3.5ex plus -1ex minus -.2ex}{1.0ex plus .2ex}{\large\bf}}
\renewcommand\subsection{\@startsection{subsection}{1}{0pt}
{2.5ex plus 1ex minus .2ex}{-1em}{\bf}}
\numberwithin{equation}{section}
\theoremstyle{plain}
\newtheorem{thm}{Theorem}[section]
\newtheorem{lem}[thm]{Lemma}
\newtheorem{prop}[thm]{Proposition}
\newtheorem{cor}[thm]{Corollary}
\newtheorem{ithm}{Theorem}
\theoremstyle{definition}
\newtheorem{dfn}[thm]{Definition}
\theoremstyle{remark}
\newtheorem{rem}[thm]{Remark}
\newcommand{\BZ}{\mathbb{Z}}
\newcommand{\BC}{\mathbb{C}}
\newcommand{\BR}{\mathbb{R}}
\newcommand{\BP}{\mathbb{P}}
\newcommand{\CO}{\mathcal{O}}
\newcommand{\CF}{\mathcal{F}}
\newcommand{\CI}{\mathcal{I}}
\newcommand{\CA}{\mathcal{A}}
\newcommand{\CU}{\mathcal{U}}
\newcommand{\be}{\mathbf{e}}
\newcommand{\bp}{\mathbf{p}}
\newcommand{\bq}{\mathbf{q}}
\newcommand{\bs}{\mathbf{s}}
\newcommand{\bI}{\mathbf{I}}
\newcommand{\bD}{\mathbf{D}}
\newcommand{\bchi}{\bm{\chi}}
\newcommand{\st}{\mathsf{t}}
\newcommand{\SF}{\mathsf{F}}
\newcommand{\RA}{\mathrm{A}}
\newcommand{\RB}{\mathrm{B}}
\newcommand{\RC}{\mathrm{C}}
\newcommand{\RX}{\mathrm{X}}
\newcommand{\Qq}{\mathscr{Q}}
\newcommand{\Fg}{\mathfrak{g}}
\newcommand{\Fh}{\mathfrak{h}}
\newcommand{\FG}{\mathfrak{G}}
\newcommand{\FF}{\mathfrak{F}}
\newcommand{\Fsl}{\mathfrak{sl}}
\newcommand{\vpi}{\varpi}
\newcommand{\eps}{\epsilon}
\newcommand{\lng}{w_{\circ}}
\newcommand{\af}{\mathrm{af}}
\DeclareMathOperator{\ed}{end}
\DeclareMathOperator{\wt}{wt}
\DeclareMathOperator{\id}{id}
\DeclareMathOperator{\dnn}{down}
\DeclareMathOperator{\sgn}{sgn}
\newcommand{\QG}{\mathbf{Q}_{G}}
\newcommand{\QGr}{\mathbf{Q}_{G}^{\mathrm{rat}}}
\newcommand{\LQG}[1]{[\CO_{\QG}(#1)]}
\newcommand{\SQG}[1]{[\CO_{\QG(#1)}]}
\newcommand{\Hom}{\mathrm{Hom}}
\newcommand{\Par}{\mathrm{Par}}
\newcommand{\Frac}{\mathrm{Frac}}
\newcommand{\QBG}{\mathrm{QBG}}
\newcommand{\bra}[1]{[\![#1]\!]}
\newcommand{\pra}[1]{(\!(#1)\!)}
\newcommand{\pair}[2]{\langle #1, #2 \rangle}
\newcommand{\edge}[1]{ \xrightarrow{\hspace{2pt}#1\hspace{2pt}} }
\newcommand{\Qe}[1]{ \xrightarrow[\mathsf{Q}]{\hspace{2pt}#1\hspace{2pt}} }
\newcommand{\Be}[1]{ \xrightarrow[\mathsf{B}]{\hspace{2pt}#1\hspace{2pt}} }
\newcommand{\ol}[1]{\overline{#1}}
\newcommand{\sprod}{\sideset{}{^\star}\prod}
\newenvironment{enu}{%
 \begin{enumerate}%
}{\end{enumerate}}
\begin{document}

\maketitle

%================%
% START ABSTRACT %
%================%

\begin{abstract}
In our previous paper, we gave a presentation of the torus-equivariant quantum $K$-theory ring $QK_{H}(Fl_{n+1})$ 
of the (full) flag manifold $Fl_{n+1}$ of type $A_{n}$ as a quotient of a polynomial ring by an explicit ideal. 
In this paper, we prove that quantum double Grothendieck polynomials, introduced by Lenart-Maeno, 
represent the corresponding (opposite) Schubert classes 
in the quantum $K$-theory ring $QK_{H}(Fl_{n+1})$ under this presentation. 
The main ingredient in our proof is an explicit formula expressing the semi-infinite Schubert class 
associated to the longest element of the finite Weyl group, which is proved by making use of 
the general Chevalley formula for the torus-equivariant $K$-group of 
the semi-infinite flag manifold associated to $SL_{n+1}(\BC)$.
\end{abstract}

%==================%
% START SECTION 01 %
%==================%

\section{Introduction.} 
\label{sec:intro}
Let $Fl_{n+1}$ denote the (full) flag manifold $G/B$ of type $A_{n}$, 
where $G = SL_{n+1}(\BC)$ is the connected, 
simply connected simple algebraic group of type $A_{n}$ 
over the complex numbers $\BC$, with Borel subgroup $B$ consisting of 
the upper triangular matrices in $G = SL_{n+1}(\BC)$ 
and maximal torus $H \subset B$ consisting of the diagonal matrices in $G = SL_{n+1}(\BC)$. 
We denote by 
$QK_{H}(Fl_{n+1}) := K_{H}(Fl_{n+1}) \otimes_{R(H)} R(H)\bra{Q}$ 
the $H$-equivariant quantum $K$-theory ring, 
defined by Givental \cite{Giv} and Lee \cite{Lee}, 
where $K_{H}(Fl_{n+1}) = \bigoplus_{w \in W} R(H)[\CO^{w}]$ 
denotes the $H$-equivariant (ordinary) $K$-theory ring of $Fl_{n+1}$ 
with the (opposite) Schubert classes $[\CO^{w}]$ indexed by 
the elements $w$ of the finite Weyl group $W = S_{n+1}$ of $G = SL_{n+1}(\BC)$ 
as a basis over $R(H)$, and where $R(H)\bra{Q} = R(H)\bra{Q_1, \ldots, Q_{n}}$ 
denotes the ring of formal power series 
in the Novikov variables $Q_{i} := Q^{\alpha_i^{\vee}}$ corresponding 
to the simple coroots $\alpha_i^{\vee}$, $1 \leq i \leq n$, 
with coefficients in the representation ring $R(H)$ of $H$;
we will identify the representation ring $R(H)$ with 
the group algebra $\BZ[P] = \bigoplus_{\nu \in P} \BZ \be^{\nu}$ of 
the weight lattice $P = \sum_{i =1}^{n} \BZ \vpi_{i}$ of $G = SL_{n+1}(\BC)$, 
where $\vpi_{i}$, $1 \leq i \leq n$, are the fundamental weights; 
we set $\varpi_{0} := 0$ and $\varpi_{n+1} := 0$ by convention. 

In our previous paper \cite{MaNS}, we proved that there exists 
an $R(H)\bra{Q}$-algebra isomorphism 
\begin{equation} \label{eq:iPsiQ}
\Psi^{Q}:R(H)\bra{Q}[x_{1},\dots,x_{n},x_{n+1}]/\CI^{Q}
\stackrel{\sim}{\rightarrow} QK_{H}(Fl_{n+1}),
\end{equation}
where $\CI^{Q}$ is the ideal of 
$R(H)\bra{Q}[x_{1},\dots,x_{n},x_{n+1}]$ 
generated by 
\begin{equation*}
\sum_{
   \begin{subarray}{c}
   J \subset [n+1] \\[1mm]
   |J|=l
   \end{subarray}} \, 
\prod_{j \in J, j+1 \notin J} (\overline{1-Q_{j}})
\prod_{j \in J} (1-x_{j}) - 
\sum_{
   \begin{subarray}{c}
   J \subset [n+1] \\[1mm]
   |J|=l
   \end{subarray}}\be^{\eps_{J}}, 
\qquad 1 \le l \le n+1, 
\end{equation*}
with $[n+1] := \{1,2,\dots,n+1\}$, and 
\begin{equation*}
\ol{1-Q_{j}}:=
\begin{cases}
1-Q_{j} & \text{if $1 \le j \le n$}, \\
1 & \text{if $j=n+1$}; 
\end{cases}
\end{equation*}
here, for a subset $J \subset [n+1]$, we set $\eps_{J} := \sum_{j \in J} \eps_{j}$, 
where $\eps_{j} = \vpi_{j} - \vpi_{j-1}$ for $1 \leq j \leq n+1$. 
Also, $\Psi^{Q}$ maps the residue class of 
$(\ol{1-Q_j})(1-x_{j})$ modulo $\CI^{Q}$ to $[\CO_{Fl_{n+1}}(-\eps_{j})]$ for $1 \le j \le n+1$.  
Here, for $1 \leq j \leq n+1$, $\CO_{Fl_{n+1}}(-\eps_{j})$ denotes 
the quotient (line) bundle $\CU_{j}/\CU_{j-1}$ over $Fl_{n+1}$, where 
$0 = \CU_{0} \subset \CU_{1} \subset \cdots \subset \CU_{n} \subset \CU_{n+1} = Fl_{n+1} \times \BC^{n+1}$ 
is the universal, or tautological, flag of subvector bundles of the trivial bundle $FL_{n+1} \times \BC^{n+1}$. 

The purpose of this paper is to prove that quantum double Grothendieck polynomials, 
introduced in \cite[Section~8]{LM}, represent the corresponding (opposite) 
Schubert classes in $QK_{H}(Fl_{n+1})$ under the presentation above; 
this result can be thought of as the $H$-equivariant analog of \cite[Theorem~50]{LNS}. 
To be more precise, for the longest element $\lng \in W=S_{n+1}$, 
the associated quantum double Grothendieck polynomial $\FG_{\lng}^{Q}(x,y)$ is defined as: 
\begin{equation*}
\FG_{\lng}^{Q}(x,y) =
\prod_{k=1}^{n}\Bigg( \sum_{l=0}^{k}(-1)^{l}(1-y_{n+1-k})^{l}F_{l}^{k}(x_{1},\dots,x_{n},x_{n+1}) \Bigg),
\end{equation*}
where 
\begin{equation*}
F_{l}^{k}(x_{1},\dots,x_{n},x_{n+1}):=
\sum_{
   \begin{subarray}{c}
   J \subset [k] \\[1mm]
   |J|=l
   \end{subarray}} \, 
\prod_{j \in J, j+1 \notin J} (1-Q_{j}) \prod_{j \in J}(1-x_{j})
\end{equation*}
for $0 \le l \le k \le n$, with 
$[k] := \{1, 2, \ldots, k\}$; 
note that $\FG_{\lng}^{Q}(x,y)$ is an element of 
$\BZ\bra{Q}[x_{1},\dots,x_{n},x_{n+1}][(1-y_{1})^{\pm 1},\dots,(1-y_{n})^{\pm 1}]$. 

The following is the key result in the proof of our main result; see Theorem~\ref{thm:lng}. 
%
%%%%%%%%%%%%%
%%% ithm1 %%%
%%%%%%%%%%%%%
%
\begin{ithm} \label{ithm1}
Let $\lng \in W=S_{n+1}$ be the longest element. 
Then, under the isomorphism \eqref{eq:iPsiQ}, the following equality holds in $QK_{H}(Fl_{n+1})$\,{\rm:}
\begin{equation}
\Psi^{Q}(\FG_{\lng}^{Q}(x,y) \ \mathrm{mod} \ \CI^{Q}) = [\CO^{\lng}] \in QK_{H}(Fl_{n+1}), 
\end{equation}
where we identify $R(H) \cong \BZ[\be^{\pm \eps_{1}},\dots,\be^{\pm \eps_{n}}]$
with $\BZ[(1-y_{1})^{\pm 1},\dots,(1-y_{n})^{\pm 1}]$, and then 
$R(H)\bra{Q}[x_{1},\dots,x_{n},x_{n+1}]$ with 
$\BZ\bra{Q}[x_{1},\dots,x_{n},x_{n+1}][(1-y_{1})^{\pm 1},\dots,(1-y_{n})^{\pm 1}]$ 
by $1 - y_{j}=\be^{-\eps_{j}}$ for $1 \le j \le n$.
\end{ithm}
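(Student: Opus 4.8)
The plan is to identify $[\CO^{\lng}]$ as a concrete, explicitly computable element of $QK_H(Fl_{n+1})$ by transporting the problem to the $K$-group $K_{H\times\BC^*}(\QGr)$ of the semi-infinite flag manifold, where the quantum product becomes an ordinary product and where the opposite Schubert class $[\CO^{\lng}]$ corresponds to the semi-infinite Schubert class attached to $\lng$. The key input, advertised in the abstract, is an explicit formula for this particular semi-infinite Schubert class obtained by iterating the general Chevalley formula for $K_{H\times\BC^*}(\QGr)$; since $\lng$ is the longest element, one can build it up as a product of Chevalley-type factors corresponding to the fundamental weights (or to the successive $\eps_j$'s), and the Chevalley formula controls each multiplication. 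So the first step is to recall/state this semi-infinite Chevalley computation and extract from it a closed formula for the class of $\CO^{\lng}$ in terms of the generators $[\CO_{Fl_{n+1}}(-\eps_j)]$ and the Novikov variables.

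Second, I would translate that formula through the presentation \eqref{eq:iPsiQ}. Under $\Psi^Q$, the residue class of $(\ol{1-Q_j})(1-x_j)$ is $[\CO_{Fl_{n+1}}(-\eps_j)]$, so any polynomial expression in the line bundles $\CO_{Fl_{n+1}}(-\eps_j)$ and the $Q_i$ pulls back to an explicit element of $R(H)\bra{Q}[x_1,\dots,x_{n+1}]/\CI^Q$. The goal is then purely algebraic: show that the preimage of $[\CO^{\lng}]$ coincides, modulo $\CI^Q$, with $\FG_{\lng}^Q(x,y)$ after the substitution $1-y_j = \be^{-\eps_j}$. Here the structure of $\FG_{\lng}^Q$ is suggestive: it is a product over $k=1,\dots,n$ of factors $\sum_{l=0}^k (-1)^l(1-y_{n+1-k})^l F_l^k(x)$, and each $F_l^k$ is exactly the ``truncated to $[k]$'' version of the degree-$l$ generator appearing in the definition of $\CI^Q$. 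So I expect the factor indexed by $k$ to match, modulo $\CI^Q$, the image of one Chevalley step, with the parameter $1-y_{n+1-k} = \be^{-\eps_{n+1-k}}$ playing the role of the equivariant weight that enters the Chevalley formula at that stage.

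Concretely, the computation I would carry out is: (i) using the defining relations of $\CI^Q$, rewrite $\sum_{J\subset[k],\,|J|=l}\prod_{j\in J,\,j+1\notin J}(\ol{1-Q_j})\prod_{j\in J}(1-x_j)$ modulo $\CI^Q$ in terms of the full-size elementary expressions and ``tail'' corrections involving $x_{k+1},\dots,x_{n+1}$ and the $\be^{\eps_J}$'s — this is the combinatorial heart, and it amounts to an inclusion-exclusion / generating-function identity in the variables $\{(\ol{1-Q_j})(1-x_j)\}$ together with the relations $\sum_{|J|=l}\prod(\ol{1-Q_j})\prod(1-x_j) \equiv \sum_{|J|=l}\be^{\eps_J}$; (ii) substitute these into the product defining $\FG_{\lng}^Q$ and collapse the telescoping product; (iii) compare the result with the Chevalley-derived expression for $[\CO^{\lng}]$. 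The main obstacle will be step (i) together with the matching in (iii): controlling the ``boundary terms'' that arise because $F_l^k$ only sums over $J\subset[k]$ rather than $J\subset[n+1]$, and checking that these boundary terms are precisely absorbed by elements of $\CI^Q$ after the telescoping — i.e., that the quantum corrections $Q_k$ appearing at the interface between consecutive factors are exactly the ones predicted by the semi-infinite Chevalley formula. I would organize this as a sequence of lemmas: one establishing the modulo-$\CI^Q$ rewriting of the truncated sums, one performing the telescoping of the product, and a final one identifying the outcome with the explicit semi-infinite class, with the Chevalley formula invoked only in that last identification.
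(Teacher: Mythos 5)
Your overall architecture coincides with the paper's: pass to the $K$-group of the semi-infinite flag manifold, obtain a product formula for the class attached to $\lng$ by iterating the general Chevalley formula, and transport it back through $\Psi^{Q}$. But the proposal leaves the one step where all the work lies both unformulated and unproved. The paper's argument hinges on identifying the specific interpolating elements $w_{k}=(s_{1}\cdots s_{n})(s_{1}\cdots s_{n-1})\cdots(s_{1}\cdots s_{k})$, with $w_{1}=\lng$ and $w_{n+1}=e$, and on the single-step identity (Proposition~\ref{prop:wk}) that $\bigl(\sum_{p=0}^{k}(-1)^{p}\be^{p\eps_{n+1-k}}\FF^{k}_{p}\bigr)\otimes\SQG{w_{k+1}}=\SQG{w_{k}}$ in $K_{H}(\QG)$. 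Establishing this is not a routine invocation of the Chevalley formula: one must fix a particular reduced $(-\eps_{J})$-chain for each $J\subset[k]$, classify the resulting admissible subsets (directed paths in the quantum Bruhat graph starting at $w_{k+1}$), and construct a sign-reversing involution on the disjoint union of these path sets over all $J\subset[k]$ whose unique fixed point contributes $\SQG{w_{k}}$; this occupies all of Section~\ref{sec:prf-wk}. Saying that ``the Chevalley formula controls each multiplication'' names the tool but not the argument, and without specifying which intermediate classes appear the telescoping cannot even be stated.

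Your step (i) is moreover a wrong turn. Once the product formula of Proposition~\ref{prop:lng} is in hand, the comparison with $\FG^{Q}_{\lng}$ is immediate and uses no relations from $\CI^{Q}$: since $\Psi^{Q}$ is a ring homomorphism sending the residue class of $(\ol{1-Q_{j}})(1-x_{j})$ to $[\CO_{Fl_{n+1}}(-\eps_{j})]$, it sends each truncated sum $F^{k}_{p}$ directly to $\CF^{k}_{p}$ (the ratio of $\prod_{j\in J,\,j+1\notin J}(1-Q_{j})$ to $\prod_{j\in J}(1-Q_{j})$ accounts exactly for the factors $1/(1-Q_{j})$ in $\CF^{k}_{p}$), and the factor indexed by $k$ then matches term by term under $1-y_{n+1-k}=\be^{-\eps_{n+1-k}}$. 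There are no ``boundary terms'' to absorb modulo $\CI^{Q}$, and rewriting the truncated sums in terms of the full-size generators of $\CI^{Q}$ would only obscure the matching. The genuine gap, then, is the missing proof of the telescoping identity in $K_{H}(\QG)$; the polynomial-level bookkeeping you anticipate as the main obstacle is not where the difficulty is.
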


Here we should mention that the result above for 
the longest element $\lng \in W = S_{n+1}$ is obtained from 
an explicit formula (Corollary~\ref{cor:wk-lng}) expressing 
the corresponding semi-infinite Schubert class $[\CO_{\QG(\lng)}]$ 
in the $H$-equivariant $K$-group $K_{H}(\QG)$ of 
the semi-infinite flag manifold $\QG$ associated to $G = SL_{n+1}(\BC)$; 
see Section~\ref{sec03} for details.

For an arbitrary $w \in W = S_{n+1}$, 
the associated quantum double Grothendieck polynomial $\FG_{w}^{Q}(x,y)$ is defined as: 
\begin{equation*}
\FG_{w}^{Q}(x,y) =\pi_{w\lng}^{(y)}\FG_{\lng}^{Q}(x,y) 
\in \BZ\bra{Q}[x_{1},\dots,x_{n},x_{n+1}][(1-y_{1})^{\pm 1},\dots,(1-y_{n})^{\pm 1}], 
\end{equation*}
where $\pi_{w\lng}^{(y)}$ denotes the Demazure operator acting on the $y$-variables 
(see Section~\ref{sec0401} for details). 
Then, by making use of quantum (dual) left Demazure operators $\delta_{i}^{\vee}$, $1 \leq i \leq n$, 
on $QK_{H}(Fl_{n+1})$, which act only on equivariant parameters, 
we can prove the following main result of this paper (see Theorem~\ref{thm:main}); 
this line of argument is indeed proposed in \cite[Section~8]{MNS}.
%
%%%%%%%%%%%%%
%%% ithm2 %%%
%%%%%%%%%%%%%
%
\begin{ithm} \label{ithm2}
Let $w$ be an arbitrary element of $W = S_{n+1}$. Then, under the isomorphism \eqref{eq:iPsiQ}, 
the following equality holds in $QK_{H}(Fl_{n+1})$\,{\rm:}
\begin{equation*}
\Psi^{Q}(\FG_{w}^{Q}(x,y) \ \mathrm{mod} \ \CI^{Q}) = [\CO^{w}], 
\end{equation*}
where we identify $R(H) \cong \BZ[\be^{\pm \eps_{1}},\dots,\be^{\pm \eps_{n}}]$
with $\BZ[(1-y_{1})^{\pm 1},\dots,(1-y_{n})^{\pm 1}]$, and then 
$R(H)\bra{Q}[x_{1},\dots,x_{n},x_{n+1}]$ with 
$\BZ\bra{Q}[x_{1},\dots,x_{n},x_{n+1}][(1-y_{1})^{\pm 1},\dots,(1-y_{n})^{\pm 1}]$ 
by $1 - y_{j}=\be^{-\eps_{j}}$ for $1 \le j \le n$. 
\end{ithm}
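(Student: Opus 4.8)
The plan is to deduce Theorem~\ref{ithm2} from Theorem~\ref{ithm1} by a Demazure-operator induction, downward on the Bruhat order, transferring the recursion $\FG_{w}^{Q}=\pi_{w\lng}^{(y)}\FG_{\lng}^{Q}$ on the polynomial side to the corresponding recursion for the Schubert classes $[\CO^{w}]$ on the geometric side. Concretely, I would first recall that any $w\in W=S_{n+1}$ can be reached from $\lng$ by a sequence of length-decreasing multiplications by simple reflections: if $\ell(ws_{i})<\ell(w)$ then $w$ is obtained from $ws_{i}$ by applying $\pi_{i}^{(y)}$ to the $y$-variables, so it suffices to show that (i) $\pi_{i}^{(y)}$ acting on polynomials corresponds, under $\Psi^{Q}$, to the quantum dual left Demazure operator $\delta_{i}^{\vee}$ on $QK_{H}(Fl_{n+1})$, and (ii) $\delta_{i}^{\vee}$ sends $[\CO^{w}]$ to $[\CO^{ws_{i}}]$ whenever $\ell(ws_{i})<\ell(w)$ (and fixes it otherwise, or whatever the precise rule turns out to be). Granting (i) and (ii), the theorem follows by descending induction with base case $w=\lng$ supplied by Theorem~\ref{ithm1}.

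For step (i): the operators $\pi_{i}^{(y)}$ act only on the equivariant variables $y_{j}$ (equivalently on $R(H)$ via $1-y_{j}=\be^{-\eps_{j}}$), so I would argue that $\Psi^{Q}$ is equivariant for the natural $W$-action (or rather the Demazure action) on $R(H)$-coefficients: the ideal $\CI^{Q}$ and the generators $(\ol{1-Q_{j}})(1-x_{j})\mapsto[\CO_{Fl_{n+1}}(-\eps_{j})]$ are compatible with this structure, so $\pi_{i}^{(y)}$ descends to a well-defined operator on the quotient ring that matches $\delta_{i}^{\vee}$ on $QK_{H}(Fl_{n+1})$ under the isomorphism. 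This is essentially the bookkeeping that identifies the two incarnations of the same Demazure/divided-difference operator; I expect it to be routine once the definitions in Sections~\ref{sec0401} and the cited framework of \cite[Section~8]{MNS} are unwound, though one must be careful that $\pi_{i}^{(y)}$ is a well-defined endomorphism of the relevant Laurent-polynomial ring and preserves $\CI^{Q}$.

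For step (ii): this is the geometric input, and I regard it as the main obstacle. One needs the statement that the quantum dual left Demazure operators $\delta_{i}^{\vee}$ act on the (opposite) Schubert basis $\{[\CO^{w}]\}$ of $QK_{H}(Fl_{n+1})$ by the expected Demazure-type rule on the index $w$. In the non-equivariant or classical $K$-theoretic setting this is a known property of left Demazure operators on Schubert classes; in the torus-equivariant quantum setting it should follow from the corresponding property in the $K$-group $K_{H}(\QG)$ of the semi-infinite flag manifold — the same machinery (the general Chevalley formula and the Demazure-operator structure on $K_{H}(\QG)$) that underlies Theorem~\ref{ithm1} — pulled back along the algebra map relating $K_{H}(\QG)$ and $QK_{H}(Fl_{n+1})$. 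I would therefore isolate a lemma asserting exactly this action of $\delta_{i}^{\vee}$ on $[\CO^{w}]$, prove it via the semi-infinite model, and then combine it with (i) to run the induction. The only remaining subtlety is to check that the Demazure recursion $\FG_{w}^{Q}=\pi_{w\lng}^{(y)}\FG_{\lng}^{Q}$ does not depend on the chosen reduced word for $w\lng$ — which is guaranteed by the braid relations satisfied by the $\pi_{i}^{(y)}$ — so that the induction is well-posed; this matches the standard definition of (quantum) double Grothendieck polynomials.
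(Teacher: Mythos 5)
Your proposal follows essentially the same route as the paper: base case $w=\lng$ from Theorem~\ref{ithm1}, the commutative diagram identifying $\pi_i^{(y)}$ on the quotient ring with $\delta_i^{\vee}$ on $QK_{H}(Fl_{n+1})$ under $\Psi^{Q}$ (your step (i)), and the Demazure action on the Schubert basis (your step (ii)), combined by descending induction along a reduced word for $w\lng$. The only differences are that the paper does not re-prove your step (ii) via the semi-infinite model but simply cites \cite[Proposition~8.3]{MNS} for it, and that since these are \emph{left} Demazure operators the precise rule is $\delta_i^{\vee}[\CO^{w}]=[\CO^{s_i w}]$ when $s_i w<w$ (left, not right, multiplication by $s_i$), matching the definition $\FG_{w}^{Q}=\pi_{w\lng}^{(y)}\FG_{\lng}^{Q}$.
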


\subsection*{Acknowledgments.}
The second and third authors would like to 
thank Cristian Lenart and Daniel Orr for related collaborations.
% The second author would like to thank Leonardo C. Mihalcea for valuable discussions 
% on the presentation of the quantum $K$-theory ring of Grassmannians, 
% which inspired this work. 
S.N. was partly supported by JSPS Grant-in-Aid for Scientific Research (C) 21K03198. 
D.S. was partly supported by JSPS Grant-in-Aid for Scientific Research (C) 19K03145 and 23K03045.
%
%==================%
% START SECTION 02 %
%==================%
%
\section{An explicit formula expressing the semi-infinite Schubert class associated to $\lng$.} 
\label{sec:expr}

%=======================%
% START SUBSECTION 0201 %
%=======================%
%
\subsection{Algebraic groups.} \label{subsec:alggrp}
Let $G$ be a connected, simply-connected simple algebraic group over $\BC$, 
$H$ a maximal torus of $G$. 
Set $\Fg := \mathrm{Lie}(G)$ and $\Fh := \mathrm{Lie}(H)$. 
Thus $\Fg$ is a finite-dimensional simple Lie algebra over $\BC$ and 
$\Fh$ is a Cartan subalgebra of $\Fg$. 
We denote by $\pair{\cdot\,}{\cdot} : \Fh^{\ast} \times \Fh \rightarrow \BC$ 
the canonical pairing, where $\Fh^{\ast} = \Hom_{\BC}(\Fh, \BC)$. 

It is known that $\Fg$ has a root system $\Delta \subset \Fh^{\ast}$. 
We take the set $\Delta^{+} \subset \Delta$ of positive roots, 
and the set $\{ \alpha_{i} \}_{i \in I} \subset \Delta^{+}$ of simple roots. 
We denote by $\alpha^{\vee} \in \Fh$ the coroot corresponding to $\alpha \in \Delta$. 
Also, we denote by $\theta \in \Delta^+$ the highest root of $\Delta$, 
and set $\rho := (1/2) \sum_{\alpha \in \Delta^{+}} \alpha$. 
The root lattice $Q$ and the coroot lattice $Q^{\vee}$ of $\Fg$ are defined by 
$Q := \sum_{i \in I} \BZ \alpha_{i}$ and $Q^{\vee} := \sum_{i \in I} \BZ \alpha_{i}^{\vee}$. 

For $i \in I$, the weight $\vpi_{i} \in \Fh^{\ast}$ 
which satisfies $\pair{\vpi_{i}}{\alpha_{j}^{\vee}} = \delta_{i,j}$ for all $j \in I$, 
where $\delta_{i,j}$ denotes the Kronecker delta, 
is called the $i$-th fundamental weight. 
The weight lattice $P$ of $\Fg$ is defined by $P := \sum_{i \in I} \BZ \vpi_{i}$. 
We denote by $\BZ[P]$ the group algebra of $P$, that is, 
the associative algebra generated by formal elements $\be^{\nu}$, $\nu \in P$, 
where the product is defined by $\be^{\mu} \be^{\nu} := \be^{\mu + \nu}$ for $\mu, \nu \in P$. 

A reflection $s_{\alpha} \in GL(\Fh^{\ast})$, $\alpha \in \Delta$, 
is defined by $s_{\alpha} \mu := \mu - \pair{\mu}{\alpha^{\vee}} \alpha$ 
for $\mu \in \Fh^{\ast}$. We write $s_{i} := s_{\alpha_{i}}$ for $i \in I$. 
Then the (finite) Weyl group $W$ of $\Fg$ is defined to be the subgroup of $GL(\Fh^{\ast})$ 
generated by $\{ s_{i} \}_{i \in I}$, that is, $W := \langle s_{i} \mid i \in I \rangle$. 
For $w \in W$, there exist $i_{1}, \ldots, i_{r} \in I$ such that $w = s_{i_{1}} \cdots s_{i_{r}}$. 
If $r$ is minimal, then the product $s_{i_{1}} \cdots s_{i_{r}}$ is called a reduced expression for $w$, 
and $r$ is called the length of $w$; we denote by $\ell(w)$ the length of $w$. 
Note that a reduced expression for $w$ is not unique. However, the length is defined uniquely. 
Also, the affine Weyl group $W_{\af}$ of $\Fg$ is, by definition, the semi-direct product group 
$W \ltimes \{ t_{\xi} \mid \xi \in Q^{\vee} \}$ of $W$ and the abelian group 
$\{ t_{\xi} \mid \xi \in Q^{\vee} \} \cong Q^{\vee}$, 
where $t_{\xi}$ denotes the translation in $\Fh^{\ast}$ corresponding to $\xi \in Q^{\vee}$.

\begin{dfn} \label{dfn:QBG} % [{\cite[Definition~6.1]{BFP}}]
The quantum Bruhat graph of $W$, denoted by $\QBG(W)$, 
is the $\Delta^{+}$-labeled directed graph whose vertices are 
the elements of $W$ and whose edges are of the following form: 
$x \edge{\alpha} y$, with $x, y \in W$ and $\alpha \in \Delta^{+}$, 
such that $y = x s_{\alpha}$ and either of the following holds: 
(B) $\ell(y) = \ell (x) + 1$; 
(Q) $\ell(y) = \ell (x) + 1 - 2 \pair{\rho}{\alpha^{\vee}}$.
An edge satisfying (B) (resp., (Q)) is called a Bruhat edge (resp., quantum edge).
\end{dfn}

For an edge $x \edge{\alpha} y$ in $\QBG(W)$, 
we sometimes write $x \Be{\alpha} y$ (resp., $x \Qe{\alpha} y$) 
to indicate that the edge is a Bruhat (resp., quantum) edge. 

%
%=======================%
% START SUBSECTION 0202 %
%=======================%
%
\subsection{The root system of type $A$.}

We recall the root system of type $A$. In the rest of this paper, 
if we assume that $G$ is of type $A$, then we use the notation introduced in this subsection. 
Also, we set $[m]:=\{ 1,2,\dots,m \}$ for $m \in \BZ_{\ge 0}$.

Assume that $G$ is of type $A_{n}$, i.e., $G = SL_{n+1}(\BC)$. 
Then $\Fg = \Fsl_{n+1}(\BC)$, 
and $\Fh := \{ h \in \Fg \mid \text{$h$ is a diagonal matrix}\}$ is 
a Cartan subalgebra of $\Fg$. 
We let $\{\eps_{k} \mid k \in [n+1] \}$ be the standard basis of $\BZ^{n+1}$ 
and realize the weight lattice as $P=\BZ^{n+1}/\BZ(\eps_1+\cdots+\eps_{n+1})$. 
By abuse of notation, we continue to denote the image of $\eps_k$ in $P$ by the same symbol. 
Thus $\varpi_k := \eps_1+\dotsm+\eps_k$, for $k \in I = [n]$, are the fundamental weights of $\Fg$, 
and $\eps_1 + \cdots + \eps_n + \eps_{n+1} = 0$. 
We set $\alpha_{i} := \eps_{i} - \eps_{i+1}$ for $i \in I = [n]$ 
and $\alpha_{i, j} := \alpha_{i} + \alpha_{i+1} + \cdots + \alpha_{j}=\eps_{i}-\eps_{j+1}$ 
for $i, j \in [n]$ with $i \le j$. 
Then $\Delta = \{ \pm \alpha_{i,j} \mid 1 \le i \le j \le n \}$ forms a root system of $\Fg$, 
with the set of positive roots $\Delta^+ = \{ \alpha_{i,j} \mid 1 \le i \le j \le n \}$ 
and the set of simple roots $\{ \alpha_{1}, \ldots, \alpha_{n} \}$. 

Let us review the Weyl group of $\Fg=\Fsl_{n+1}(\BC)$. 
By the definition of the Weyl group, we have $W = \langle s_{1}, \ldots, s_{n} \rangle$, 
where $s_{1}, \ldots, s_{n}$ are simple reflections. 
For $i,j \in [n+1]$ with $i < j$, 
we denote by $(i,j)$ the transposition of $i$ and $j$. 
It is known that the correspondence 
$s_{1} \mapsto (1, 2),\, 
 s_{2} \mapsto (2, 3),\,\dots,\,
 s_{n} \mapsto (n, n+1)$ 
defines a group isomorphism $W \xrightarrow{\sim} S_{n+1}$, 
where $S_{n+1}$ denotes the symmetric group of degree $n+1$. 
With this isomorphism, we regard $x \in W$ as a permutation on $[n+1]=\{ 1, \ldots, n+1 \}$. 
By this identification, we see that $w \eps_{i} = \eps_{w(i)}$ for $i \in [n+1]$. 
Also, the longest element of $W$, denoted by $\lng$, is regarded as the permutation
\begin{equation*}
\begin{pmatrix}
1 & 2 & \cdots & n & n+1 \\ 
n+1 & n & \cdots & 2 & 1
\end{pmatrix},
\end{equation*}
that is, $\lng$ is considered to be the permutation defined 
by $\lng(k) = n+2-k$ for $k \in [n+1]$. 
%
%=======================%
% START SUBSECTION 0203 %
%=======================%
%
\subsection{An explicit formula expressing the semi-infinite Schubert class associated to $\lng$.}
\label{subsec:expr}

Let $\QGr$ denote the semi-infinite flag manifold associated to $G$, 
which is a pure ind-scheme of infinite type whose set of 
$\BC$-valued points is $G(\BC\pra{z})/(H(\BC) \cdot N(\BC\pra{z}))$ 
(see \cite{KNS, Kat2} for details), 
where $G$ is 
a connected, simply-connected simple algebraic group over $\BC$, $B=HN$ is a Borel subgroup, 
$H$ is a maximal torus, and $N$ is the unipotent radical of $B$; 
note that $\QGr$ is defined as an inductive limit of copies of 
a (reduced) closed subscheme $\QG \subset 
\prod_{i \in I} \BP(L(\vpi_{i}) \otimes_{\BC} \BC\bra{z})$ of infinite type, 
where $L(\vpi_{i})$ is the irreducible highest weight $G$-module of highest weight $\vpi_{i}$. 
One has the semi-infinite Schubert (sub)variety 
$\QG(x) \subset \QGr$ associated to each element $x$ of 
the affine Weyl group $W_{\af} \cong W \ltimes Q^{\vee}$, 
with $W = \langle s_i \mid i \in I \rangle$ 
the (finite) Weyl group and $Q^{\vee} = \sum_{i \in I} \BZ \alpha_i^{\vee}$ 
the coroot lattice of $G$; note that $\QG(x)$ is, by definition, the closure 
of the orbit under the Iwahori subgroup $\bI \subset G(\BC\bra{z})$ 
through the ($H \times \BC^{*}$)-fixed point labeled by $x \in W_{\af}$, 
and that $\QG(x)$ is contained in $\QG(e) = \QG$ 
for all $x \in W_{\af}^{\geq 0} := 
\{ x = w t_{\xi} \in W_{\af} \mid w \in W, \xi \in Q^{\vee,+} \}$, 
where $Q^{\vee, +} := \sum_{i \in I} \BZ_{\geq 0} \alpha_{i}^{\vee} \subset Q^{\vee}$. 
Also, for each weight $\nu = \sum_{i \in I} m_{i} \vpi_{i} \in P$ with $m_{i} \in \BZ$, 
we have a $G(\BC\bra{z}) \rtimes \BC^{*}$-equivariant line bundle 
$\CO_{\QG}(\nu)$ over $\QG$, which is given by the restriction of 
the line bundle $\boxtimes_{i \in I} \CO(m_{i})$ on 
$\prod_{i \in I} \BP(L(\vpi_{i}) \otimes_{\BC} \BC\bra{z})$. 

The $(H \times \BC^{*})$-equivariant $K$-group $K_{H \times \BC^{\ast}}(\QG)$ 
is a module over $\BZ[q,q^{-1}][P]$ (equivariant parameters), 
with the semi-infinite Schubert classes $[\CO_{\QG(x)}]$ associated to 
$x \in W_{\af}^{\geq 0} \simeq W \times Q^{\vee,+}$ as a topological basis 
(in the sense of \cite[Proposition~5.11]{KNS}) over $\BZ[q,q^{-1}][P]$, 
where $P = \sum_{i \in I} \BZ \vpi_{i}$ is the weight lattice of $G$, 
$\BZ[P] = \bigoplus_{\nu \in P} \BZ\be^{\nu} \cong R(H)$, 
and $q \in R(\BC^{*})$ corresponds to loop rotation. 
More precisely, $K_{H \times \BC^{*}}(\QG)$ is defined to be the $\BZ[q, q^{-1}][P]$-submodule 
of the Laurent series (in $q^{-1}$) extension 
$\BZ\pra{q^{-1}}[P] \otimes_{\BZ\bra{q^{-1}}[P]} K_{\bI \rtimes \BC^{*}}^{\prime}(\QG)$ 
of the equivariant, with respect to the Iwahori subgroup $\bI$ and loop rotation, 
$K$-group $K_{\bI \rtimes \BC^{*}}^{\prime}(\QG)$ (see \cite{KNS}) consisting of 
all convergent (in the sense of \cite[Proposition~5.11]{KNS}) 
infinite linear combinations of the classes $[\CO_{\QG(x)}]$, $x \in W_{\af}^{\geq 0}$, 
of the structure sheaves $\CO_{\QG(x)}$ of the semi-infinite Schubert varieties $\QG(x) \subset \QG$ 
with coefficients $a_{x} \in \BZ[q, q^{-1}][P]$; 
briefly speaking, convergence holds 
if the sum $\sum_{x \in W_{\af}^{\geq 0}} \vert a_{x} \vert$ 
of the absolute values $|a_{x}|$ lies in $\BZ_{\geq 0}[P]\pra{ q^{-1} }$. 
Note that for each $x \in W_{\af}^{\geq 0}$ and $\nu \in P$, 
we have the twisted semi-infinite Schubert class 
$[\CO_{\QG(x)}(\nu)] \in K_{H \times \BC^{\ast}}(\QG)$ corresponding to
 the tensor product sheaf $\CO_{\QG(x)} \otimes \CO_{\QG}(\nu)$; 
 in particular, we have $[\CO_{\QG}(\nu)] \in K_{H \times \BC^{\ast}}(\QG)$ for all $\nu \in P$. 

Also, the $H$-equivariant $K$-group $K_{H}(\QG)$ is defined to be the $\BZ[P]$-module 
consisting of all (possibly infinite) linear combinations of 
the semi-infinite Schubert classes $[\CO_{\QG(x)}]$, $x \in W_{\af}^{\geq 0}$, 
with coefficients in $\BZ[P]$, 
which is obtained from $K_{H \times \BC^{*}}(\QG)$ by the specialization $q = 1$; 
it is equipped with tensor product operation 
with an arbitrary line bundle class $[\CO_{\QG}(\nu)]$ for $\nu \in P$. 
Note that $[\CO_{\QG}]$ is the identity for this tensor product operation, 
and hence $[\CO_{\QG}(\nu)] = [\CO_{\QG}] \otimes [\CO_{\QG}(\nu)]$ for all $\nu \in P$. 
In addition, for $\xi \in Q^{\vee,+}$, we can define a $\BZ[P]$-linear operator 
$\st_{\xi}$ on $K_{H}(\QG)$ by $\st_{\xi}[\CO_{\QG(x)}] := 
[\CO_{\QG(x t_{\xi})}]$ for $x \in W_{\af}^{\geq 0}$; 
for $j \in I$, we set $\st_{j} := \st_{\alpha_j^{\vee}}$. 
We deduce that 
\begin{equation}\label{eq:shift}
(\st_{\xi}[\CO_{\QG(x)}]) \otimes [\CO_{\QG}(\nu)] = 
\st_{\xi}([\CO_{\QG(x)}] \otimes [\CO_{\QG}(\nu)])
\end{equation}
for $x \in W_{\af}^{\geq 0}$, $\xi \in Q^{\vee,+}$, and $\nu \in P$; 
in fact, if we define a $\BZ[q, q^{-1}][P]$-linear operator $\st_{\xi}$ on 
$K_{H \times \BC^{\ast}}(\QG)$ in the same way, then we have 
\begin{equation*}
(\st_{\xi}[\CO_{\QG(x)}]) \otimes [\CO_{\QG}(\nu)] = 
q^{-\langle \nu, -w_{\circ} \xi \rangle} \st_{\xi}([\CO_{\QG(x)}] \otimes [\CO_{\QG}(\nu)]) 
\end{equation*}
in $K_{H \times \BC^{\ast}}(\QG)$, 
which follows easily from \cite[Proposition D.1]{KNS}. 
Since the semi-infinite Schubert classes $[\CO_{\QG(x)}]$, $x \in W_{\af}^{\geq 0}$, 
form a topological basis of $K_{H}(\QG)$ over $\BZ[P]$ in the sense above, it follows that
\begin{equation}\label{eq:gshift}
(\st_{\xi} \, \bullet) \otimes [\CO_{\QG}(\nu)] = 
\st_{\xi}(\bullet \otimes [\CO_{\QG}(\nu)])
\end{equation}
for an arbitrary element $\bullet \in K_{H}(\QG)$ and $\xi \in Q^{\vee,+}$, $\nu \in P$.

\begin{rem} \label{rem:st}
Let $\SF_{1},\dots,\SF_{N}$ be polynomials in $\st_{i}$, $i \in I$, 
and let $\nu_{1},\dots,\nu_{N} \in P$. When we write 
$(\SF_{1}\LQG{ \nu_{1} }) \otimes (\SF_{2}\LQG{ \nu_{2} }) 
\otimes \cdots \otimes (\SF_{N}\LQG{ \nu_{N} }) \otimes \bullet$ for $\bullet \in K_{H}(\QG)$, 
it is always understood to be the element 
$(\SF_{1}\SF_{2} \cdots \SF_{N})(\LQG { \nu_{1} + \nu_{2} + \cdots + \nu_{N} } \otimes \bullet)$ 
in $K_{H}(\QG)$. Namely, for $\bullet \in K_{H}(\QG)$, we always understand that 
\begin{equation*}
\begin{split}
& (\SF_{1}\LQG{ \nu_{1} }) \otimes 
  (\SF_{2}\LQG{ \nu_{2} }) \otimes \cdots \otimes
  (\SF_{N}\LQG{ \nu_{N} }) \otimes \bullet \\
 & = (\SF_{1}\SF_{2} \cdots \SF_{N}) 
  (\LQG { \nu_{1} + \nu_{2} + \cdots + \nu_{N} } \otimes \bullet). 
\end{split}
\end{equation*}
\end{rem}

Now, let us assume that $\Fg$ is of type $A_{n}$. 
For $1 \le k \le n$, we set 
\begin{equation*}
\begin{split}
& w_{k}  := 
(s_{1}s_{2} \cdots s_{n})
(s_{1}s_{2} \cdots s_{n-1}) \cdots 
(s_{1}s_{2} \cdots s_{k+1})
(s_{1}s_{2} \cdots s_{k}) \\
& = (1,2,\dots,n+1)(1,2,\dots,n) \cdots (1,2,\dots,k+2)(1,2,\dots,k+1) \\[1.5mm]
& = 
\begin{pmatrix}
1 & 2 & \cdots & k-1 & k & k+1 & \cdots & n & n+1 \\
n-k+2 & n-k+3 & \cdots & n & n+1 & n-k+1  & \cdots & 2 & 1
\end{pmatrix}, 
\end{split}
\end{equation*}
which is an element of the Weyl group $W = S_{n+1}$ of $\Fg$; 
we set $w_{n+1}:=e$ by convention. 
Note that $w_{1} = \lng$, the longest element of $W$. 
Also, for a subset $J$ of $[n+1]=\bigl\{1,2,\dots,n+1\bigr\}$, 
we set 
\begin{equation}
\eps_{J}:=\sum_{j \in J} \eps_{j}. 
\end{equation}
We have $\eps_{J} \in W\vpi_{|J|}$, where 
$\vpi_{0}=\vpi_{n+1}:=0$ by convention. 
In particular, $\eps_{J}$ is a minuscule weight, i.e., 
$\pair{\eps_{J}}{\alpha_{j}^{\vee}} \in \{-1,0,1\}$ for all $1 \le j \le n$. 
We set 
\begin{align*}
J^{-} & := \bigl\{1 \le j \le n \mid j \notin J,\,j+1 \in J \bigr\} \\
& = \bigl\{ 1 \le j \le n \mid \pair{\eps_{J}}{\alpha_{j}^{\vee}} < 0 \bigr\} \\
& = \bigl\{ 1 \le j \le n \mid \pair{\eps_{J}}{\alpha_{j}^{\vee}} = -1 \bigr\}; 
\end{align*}
recall that $\alpha_{j} = \eps_{j}-\eps_{j+1}$. 
For $0 \le p \le k \le n+1$, we set
\begin{equation}\label{eq:fnctn}
\FF^{k}_{p} := 
 \sum_{
   \begin{subarray}{c}
   J \subset [k] \\[1mm]
   |J|=p
   \end{subarray}}
 \left(\prod_{ j \in J^{-} }
 (1-\st_{j})\right)\LQG{ \lng \eps_{J} } \in K_{H}(\QG), 
\end{equation}
where $[k]=\bigl\{1,2,\dots,k\bigr\}$; 
note that $\FF^{k}_{0}=1$. 
%
%%%%%%%%%%%%%%%
%%% prop:wk %%%
%%%%%%%%%%%%%%%
%

The proof of the following proposition will be given in Section~\ref{subsec:prf-wk}. 

\begin{prop} \label{prop:wk}
For $1 \leq k \leq n$, the following equality holds in $K_{H}(\QG)$\,{\rm:} 
\begin{equation} \label{eq:wk}
\left(
 \sum_{p=0}^{k}(-1)^{p} \be^{p \eps_{n+1-k}} \FF^{k}_{p}
\right) \otimes \SQG{w_{k+1}} = \SQG{w_{k}}. 
\end{equation}
\end{prop}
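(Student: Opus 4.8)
The plan is to prove the equality \eqref{eq:wk} by applying the general Chevalley formula for $K_{H}(\QG)$ repeatedly and tracking the result combinatorially. The left-hand side of \eqref{eq:wk}, after expanding $\FF^{k}_{p}$ via \eqref{eq:fnctn} and using Remark~\ref{rem:st}, is a $\BZ[P]$-linear combination of terms of the form $\bigl(\prod_{j \in J^{-}}(1-\st_{j})\bigr)\bigl(\LQG{\lng\eps_{J}} \otimes \SQG{w_{k+1}}\bigr)$ with $J \subset [k]$, weighted by signs and powers of $\be^{\eps_{n+1-k}}$. The first step is therefore to compute $\LQG{\lng\eps_{J}} \otimes \SQG{w_{k+1}}$, i.e.\ the effect of tensoring the Schubert class $\SQG{w_{k+1}}$ by the line bundle associated to the minuscule weight $\lng\eps_{J}$. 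Since $\eps_{J} \in W\vpi_{|J|}$ is minuscule, the Chevalley formula in $K_{H}(\QG)$ should express this as a finite sum over certain paths in the quantum Bruhat graph $\QBG(W)$ starting at $w_{k+1}$, with coefficients that are monomials $\be^{(\cdot)}$ in the equivariant parameters and monomials in the shift operators $\st_{\xi}$ coming from the quantum edges traversed.

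Next I would exploit the explicit permutation description of $w_{k}$ and $w_{k+1}$ given in the excerpt. The key structural fact to isolate is that $w_{k}$ is obtained from $w_{k+1}$ by left-multiplication by the cycle $(1,2,\dots,k+1)$, which is $s_1 s_2 \cdots s_k$; equivalently, $w_{k} = (s_1 \cdots s_k)\, w_{k+1}$, and one checks this is length-additive. This should match up precisely with the shape of the Chevalley expansion for the minuscule weight $\vpi_{k}$ (or its $\lng$-translate), so that the dominant term on the left-hand side — coming from $J = [k]$ with all the $(1-\st_j)$ factors and the top power of $\be^{\eps_{n+1-k}}$ — produces exactly $\SQG{w_k}$, while all remaining terms must cancel. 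I would set up the sum over $J \subset [k]$ and over Chevalley paths as a double sum, then reorganize it: fix the target Schubert variety appearing in the expansion and collect the coefficient, showing it vanishes unless the target is $w_k$. The alternating signs $(-1)^p$ together with the operators $(1-\st_j)$ (themselves "alternating" in $\st_j$) are what drive the cancellation, via an inclusion–exclusion / telescoping argument on subsets $J$.

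Concretely, I expect the cleanest route is induction on $k$ running downward from $k = n$, or alternatively to peel off one simple reflection at a time: relate $\FF^{k}_{p}$ to $\FF^{k-1}_{p}$ and $\FF^{k-1}_{p-1}$ (splitting subsets $J \subset [k]$ according to whether $k \in J$), and correspondingly factor the Chevalley operator for $\vpi_k$ through that for $\vpi_{k-1}$ and a single Chevalley multiplication by $\CO_{\QG}(-\eps_k)$. Each single Chevalley step by $\CO_{\QG}(-\eps_k)$ is governed by the ($H$-equivariant, $q=1$ specialized) formula of the type proved in the semi-infinite setting, involving the edges $w \edge{\alpha} w s_{\alpha}$ of $\QBG(W)$ with $\pair{\eps_k}{\alpha^\vee} \ne 0$; for the permutations $w_j$ in question these edges are very constrained, so the bookkeeping stays finite and explicit.

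The main obstacle will be the combinatorial cancellation: verifying that, after expanding everything via the Chevalley formula, every term other than $\SQG{w_k}$ cancels, and that the surviving coefficient is exactly $1$ (no stray equivariant factor $\be^{(\cdot)}$ and no stray shift operator $\st_\xi$). This requires a careful analysis of which Chevalley paths from $w_{k+1}$ can occur, pairing up the Bruhat-edge contributions against the $\st_j$-contributions from the quantum edges and against the inclusion–exclusion over $J$, and checking that the equivariant weights $\lng\eps_J$ combine with the weights picked up along the paths to give precisely the prefactors $\be^{p\eps_{n+1-k}}$ that are already written on the left-hand side. I would organize this as a lemma identifying the Chevalley expansion of $\LQG{\lng\eps_J}\otimes\SQG{w_{k+1}}$ in closed form for the relevant $J$ and $w_{k+1}$, and then a second lemma doing the sign-reversing involution that kills all but one term.
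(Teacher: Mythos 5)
Your overall strategy coincides with the paper's: apply the general Chevalley formula (Theorem~\ref{thm:genchev}) to each $\LQG{\lng\eps_J}\otimes\SQG{w_{k+1}}$, absorb the $\ol{\Par}$-sum into the factors $\prod_{j\in J^{-}}(1-\st_j)^{-1}$ so that they cancel against the $(1-\st_j)$ in $\FF^k_p$, and then kill all terms but one by a sign-reversing involution on the set of pairs (subset $J\subset[k]$, admissible path starting at $w_{k+1}$). However, as written the proposal is only a plan: the entire content of the paper's proof lies in (a) the choice of a \emph{specific} reduced $(-\eps_J)$-chain $\Gamma_J$ (the $\beta$-part/$\gamma$-part decomposition of Section~\ref{subsec:prf-wk}), (b) the classification of which admissible paths from $w_{k+1}$ actually occur — Lemma~\ref{lem:bpart} shows the $\beta$-part is forced to be a rigid staircase of Bruhat edges, and Lemma~\ref{lem:gamma} shows the $\gamma$-part has length at most one — and (c) the explicit involution $\Phi$, which requires splitting $\bD_J$ into the five classes $\RA_1,\RA_2,\RB_1,\RB_2,\RC$ and, crucially, moving a path from $\bD_J$ to $\bD_{\Phi(J)}$ with $\Phi(J)=(J\setminus\{j_1\})\sqcup\{j_1\pm 1\}$ or $J\sqcup\{1\}$, $J\setminus\{1\}$. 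None of these steps is carried out or even sketched at a level where one could check it, and they are where the proof could fail.

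Moreover, the one concrete structural claim you do make is incorrect: the surviving term is \emph{not} the ``dominant'' one coming from $J=[k]$ with the top power of $\be^{\eps_{n+1-k}}$. In the paper the unique fixed point of the involution is the single path $\bq=[(k-1,k),\dots,(1,k)\mid(k,k+1)]$ attached to the singleton $J=\{k\}$, so $p=|J|=1$; its coefficient is $\be^{\eps_{n+1-k}-\ed(\bq_\beta)\eps_{\{k\}}}(-1)^{1+\ell(\bq_\gamma)}=(+1)\cdot\be^{0}$, giving exactly $\SQG{w_k}$. The cancellation is therefore not a ``dominant term plus telescoping'' phenomenon within a fixed $J$, but an involution pairing contributions across \emph{different} subsets $J$, and identifying the correct fixed point is part of the work. (A small further slip: $w_k=w_{k+1}\,(s_1\cdots s_k)$ is obtained by \emph{right}, not left, multiplication by the cycle $(1,2,\dots,k+1)$.) Your alternative suggestion — induction on $k$ via the recursion splitting $J\subset[k]$ according to whether $k\in J$ — is genuinely different from the paper and might work, but it too is unsubstantiated here.
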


Noting that $w_{1}=\lng$ and $w_{n+1}=e$, we obtain the following. 
%
%%%%%%%%%%%%%%%%%%
%%% cor:wk-lng %%%
%%%%%%%%%%%%%%%%%%
%
\begin{cor} \label{cor:wk-lng}
The following equality holds in $K_{H}(\QG)$\,{\rm:} 
\begin{equation} \label{eq:wk-lng}
\SQG{\lng} = 
\bigotimes_{k=1}^{n}
\left(
 \sum_{p=0}^{k}(-1)^{p} \be^{p \eps_{n+1-k}} \FF^{k}_{p}
\right).
\end{equation}
\end{cor}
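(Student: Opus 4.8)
The plan is to obtain Corollary~\ref{cor:wk-lng} as a straightforward telescoping consequence of Proposition~\ref{prop:wk}. Since $w_{n+1} = e$ by convention, we have $\SQG{w_{n+1}} = [\CO_{\QG}]$, which is the identity for the tensor product operation on $K_{H}(\QG)$. Applying \eqref{eq:wk} with $k = n$ gives
\begin{equation*}
\left( \sum_{p=0}^{n}(-1)^{p} \be^{p \eps_{1}} \FF^{n}_{p} \right) \otimes \SQG{w_{n+1}} = \SQG{w_{n}},
\end{equation*}
and since $\SQG{w_{n+1}} = [\CO_{\QG}]$ is the identity, the left-hand side equals $\sum_{p=0}^{n}(-1)^{p} \be^{p \eps_{1}} \FF^{n}_{p}$. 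Then I would feed this into \eqref{eq:wk} with $k = n-1$, and continue downward, at each stage rewriting $\SQG{w_{k+1}}$ using the equality just obtained, until reaching $k = 1$ where $w_{1} = \lng$.

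The one point requiring care is the bookkeeping of the tensor products and the associated $\st$-operators, in the sense spelled out in Remark~\ref{rem:st}: the expression $\bigotimes_{k=1}^{n}\bigl( \sum_{p=0}^{k}(-1)^{p} \be^{p \eps_{n+1-k}} \FF^{k}_{p} \bigr)$ must be interpreted with all the shift operators $\st_{j}$ collected together acting on the sum of all the line bundle twists, rather than as a naive composition of tensor products. To make the induction go through cleanly, I would formulate the inductive hypothesis as: for each $m$ with $1 \le m \le n$,
\begin{equation*}
\SQG{w_{m}} = \bigotimes_{k=m}^{n} \left( \sum_{p=0}^{k}(-1)^{p} \be^{p \eps_{n+1-k}} \FF^{k}_{p} \right),
\end{equation*}
where the right-hand side is understood in the sense of Remark~\ref{rem:st}. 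The base case $m = n$ is \eqref{eq:wk} with $k = n$ together with $\SQG{w_{n+1}} = [\CO_{\QG}]$, and the inductive step is precisely \eqref{eq:wk} with $k = m-1$, applied to $\bullet = \SQG{w_{m}}$ rewritten via the inductive hypothesis; the compatibility \eqref{eq:gshift} of the operators $\st_{\xi}$ with tensoring by line bundle classes is what guarantees that inserting the inductive hypothesis inside the tensor product in \eqref{eq:wk} produces exactly the claimed nested expression for $\SQG{w_{m-1}}$.

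The main (and essentially only) obstacle is notational rather than mathematical: one must check that the convention of Remark~\ref{rem:st} is consistently respected when the product $\bigotimes_{k=1}^{n}$ is unwound, so that the factors $\FF^{k}_{p}$ (which are themselves built from the operators $1 - \st_{j}$ acting on line bundle classes $\LQG{\lng\eps_{J}}$) combine correctly with the explicit scalar twists $\be^{p\eps_{n+1-k}} \in \BZ[P]$. Once the inductive hypothesis is set up in the form above, each step is a direct substitution using Proposition~\ref{prop:wk}, and taking $m = 1$ yields \eqref{eq:wk-lng}.
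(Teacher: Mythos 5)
Your proposal is correct and matches the paper's (very terse) argument: the corollary is exactly the telescoping of Proposition~\ref{prop:wk} from $k=n$ down to $k=1$, using $\SQG{w_{n+1}}=[\CO_{\QG}]$ as the identity and $w_{1}=\lng$, with the products interpreted as in Remark~\ref{rem:st}. Your extra care with \eqref{eq:gshift} and the $\st$-operator bookkeeping is exactly the right point to check, and nothing further is needed.
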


%==================%
% START SECTION 03 %
%==================%
%
\section{Proof of the main result for $w = \lng$.} \label{sec03}
%
%=======================%
% START SUBSECTION 0301 %
%=======================%
%
\subsection{Relation between $K_{H}(\QG)$ and $QK_{H}(G/B)$.} \label{sec0301}

Let $G$ be a connected, simply connected simple algebraic group over $\BC$, 
with Borel subgroup $B \subset G$ and maximal torus $H \subset B$. 
Let $QK_{H}(G/B) := K_{H}(G/B) \otimes_{R(H)} R(H)\bra{Q^{\vee,+}}$ 
denote the $H$-equivariant quantum $K$-theory ring of the ordinary flag manifold $G/B$, 
defined by Givental~\cite{Giv} and Lee~\cite{Lee}, where $R(H)\bra{Q^{\vee,+}}$ is 
the ring of formal power series in the Novikov variables $Q_i = Q^{\alpha_i^{\vee}}$, $i \in I$, 
with coefficients in the representation ring $R(H)$ of $H$; 
for $\xi = \sum_{i \in I} k_i \alpha_i^{\vee} \in 
Q^{\vee,+} = \sum_{i \in I} \BZ_{\geq 0} \alpha_i^{\vee}$, we set 
$Q^{\xi} := \prod_{i \in I} Q_i^{k_i} \in R(H)\bra{Q^{\vee,+}}$.
The quantum $K$-theory ring $QK_{H}(G/B)$ is a free module over $R(H)\bra{Q^{\vee,+}}$ 
with the (opposite) Schubert classes $[\CO^{w}]$, $w \in W$, as a basis; 
also, the quantum multiplication $\star$ in $QK_{H}(G/B)$ is a deformation of 
the classical tensor product in $K_{H}(G/B)$, and is defined in terms of the $2$-point and $3$-point 
(genus zero, equivariant) $K$-theoretic Gromov-Witten invariants; 
see \cite{Giv} and \cite{Lee} for details.

In \cite{Kat1,Kat3}, based on \cite{BF,IMT} (see also \cite{ACT}), 
Kato established an $R(H)$-module isomorphism $\Phi$ from $QK_{H}(G/B)$ onto 
the $H$-equivariant $K$-group $K_{H}(\QG)$ of the semi-infinite flag manifold $\QG$, 
in which tensor product operation with an arbitrary line bundle class is 
induced from that in $K_{H\times\BC^*}(\QG)$ by the specialization $q = 1$; 
in our notation, 
the map $\Phi$ sends 
the (opposite) Schubert class $\be^{\mu}[\CO^{w}] Q^{\xi}$ in $QK_{H}(G/B)$ 
to the corresponding semi-infinite Schubert class $\be^{-\mu}[\CO_{\QG(wt_{\xi})}]$ 
in $K_{H}(\QG)$ for $\mu \in P$, $w \in W$, and $\xi \in Q^{\vee,+}$. 
The isomorphism $\Phi$ also respects, in a sense, quantum multiplication $\star$ 
in $QK_{H}(G/B)$ and tensor product in $K_{H}(\QG)$.
More precisely, one has the commutative diagram:
\begin{equation} \label{eq:qdiagram}
\begin{CD}
QK_{H}(G/B) @>{\sim}>> K_{H}(\QG) \\
@V{\bullet \,\, \star [\CO_{G/B}(- \varpi_i)]}VV  
@VV{\bullet \,\, \otimes [\CO_{\QG}(\lng \varpi_i)]}V \\
QK_{H}(G/B) @>>{\sim}> K_{H}(\QG), 
\end{CD}
\end{equation}
where for $\nu \in P$, the line bundle $\CO_{G/B}(-\nu)$ over $G/B$ denotes 
the $G$-equivariant line bundle constructed as 
the quotient space $G \times^{B} \BC_{\nu}$ of 
the product space $G \times \BC_{\nu}$ by the usual (free) left action of 
the Borel subgroup $B$ of $G$ corresponding to the positive roots, 
with $\BC_{\nu}$ the one-dimensional $B$-module of weight $\nu$. 
(Here we warn the reader that the conventions of \cite{Kat1} 
differs from those of \cite{KNS} and this paper,
by the twist coming from the involution $-\lng$.)
Also, we know from \cite{Kat1} that for $w \in W$ and $\xi \in Q^{\vee,+}$, 
$\Phi([\CO^{w}] Q^{\xi}) = \st_{\xi} \, \Phi([\CO^{w}])$ holds, 
and hence that for an arbitrary element $\bullet$ of 
$QK_{H}(G/B)$ and $\xi \in Q^{\vee,+}$, 
\begin{equation} \label{eq:push}
\Phi(\bullet \, Q^{\xi}) = \st_{\xi} \, \Phi(\bullet)
\end{equation}
holds. 

We know from \cite[Corollary~5.14]{BCMP} that 
the quantum multiplicative structure over $R(H)\bra{Q^{\vee,+}}$ 
of $QK_{H}(G/B)$ 
is completely determined by the operators of quantum multiplication 
by $[\CO_{G/B}(-\vpi_{i})]$ for $i \in I$; 
in fact, by using Nakayama's Lemma (i.e., \cite[Lemma A.1]{GMSZ}), 
we can show that $QK_{H}(G/B)$ is generated as an algebra (with quantum multiplication $\star$) 
over $R(H)\bra{Q^{\vee,+}}$ by $[\CO_{G/B}(-\varpi_{i})]$, $i \in I$, 
since $K_{H}(G/B)$ is known to be generated by the same line bundle classes 
as an algebra (with tensor product $\otimes$) over $R(H)$ (\cite{Mi}). 
Therefore, if $G$ is of type $A_{n}$, i.e., $G = SL_{n+1}(\BC)$, 
then we deduce from \cite[Propositions~5.1 and 5.2]{MaNS}, together 
with the comments following them, that $QK_{H}(Fl_{n+1})$ is generated 
as an algebra over $R(H)\bra{Q^{\vee,+}} = R(H)\bra{Q} = R(H)\bra{Q_1, \ldots, Q_{n}}$ 
by the line bundle classes $[\CO_{Fl_{n+1}}(-\eps_{j})]$, $1 \leq j \leq n$; 
indeed, for $1 \leq j \leq n$ and $\bullet \in QK_{H}(Fl_{n+1})$, we have 
\begin{equation*}
\begin{split}
& \bullet \star [\CO_{Fl_{n+1}}(-\varpi_{j})] \\
 & = \bullet \star [\CO_{Fl_{n+1}}(- \epsilon_{1})] \star \frac{1}{1 - Q_{1}}[\CO_{Fl_{n+1}}(- \epsilon_{2})] \star \cdots \star \frac{1}{1 - Q_{j-1}} [\CO_{Fl_{n+1}}(- \epsilon_{j})]. 
\end{split}
\end{equation*}

In the rest of this section, we assume that $G$ is of type $A_{n}$, i.e., $G = SL_{n+1}(\BC)$. 
Now, for $0 \le p \le k \le n+1$, we set
\begin{equation}
\CF^{k}_{p}:=
 \sum_{
   \begin{subarray}{c}
   J \subset [k] \\[1mm]
   |J|=p
   \end{subarray}} \, 
 \prod_{ \begin{subarray}{c} 1 \le j \le k \\[1mm] j,\,j+1 \in J \end{subarray} }
 \frac{1}{1-Q_{j}} 
 \sprod_{j \in J} [\CO_{Fl_{n+1}}(-\eps_{j})] \in QK_{H}(Fl_{n+1}), 
\end{equation}
with $[k]=\bigl\{1,2,\dots,k\bigr\}$, 
where $\prod^{\star}$ denotes the product with respect to the quantum multiplication $\star$; 
note that $\CF^{k}_{0} = 1$ for $0 \le k \le n+1$. 
Also, recall from \eqref{eq:fnctn} that for $0 \leq p \leq k \leq n+1$, 
the element $\FF^{k}_{p} \in K_{H}(\QG)$ is defined by: 
\begin{equation}
\FF^{k}_{p} =
 \sum_{
   \begin{subarray}{c}
   J \subset [k] \\[1mm]
   |J|=p
   \end{subarray}}
 \left( \prod_{ j \notin J,\,j+1 \in J }
 (1-\st_{j}) \right)[\CO_{\QG}(\lng \eps_{J})]. 
\end{equation}
Here we know from \cite[Proposition~5.2]{MaNS}, 
together with the comment following it, that in type $A_{n}$, 
\begin{equation*}
\Phi \left(\bullet \star \frac{1}{1 - Q_{j-1}}[\CO_{Fl_{n+1}}(- \epsilon_{j})]\right) = \Phi(\bullet) \otimes [\CO_{\QG}(\lng \epsilon_{j})]
\end{equation*}
for $1 \leq j \leq n+1$ and an arbitrary element $\bullet \in QK_{H}(Fl_{n+1})$, 
where $G = SL_{n+1}(\BC)$, and $Q_{0} = Q_{n+1} := 0$ by convention. 
By repeated application of this equation, 
together with \eqref{eq:gshift} and \eqref{eq:push}, 
we can show that $\Phi(\CF^{k}_{p}) = \FF^{k}_{p} \in K_{H}(\QG)$ 
for $0 \leq p \leq k \leq n+1$, and also obtain the following proposition 
from Corollary~\ref{cor:wk-lng}.
\begin{prop} \label{prop:lng}
Let $\lng \in W = S_{n+1}$ be the longest element. 
Then, the following equality holds in $QK_{H}(Fl_{n+1})$\,{\rm:}
\begin{equation}
[\CO^{\lng}] = 
\sprod_{k=1}^{n} \Bigg( \sum_{p=0}^{k}(-1)^{p} \be^{-p \eps_{n+1-k}} \CF^{k}_{p}\Bigg) 
\in QK_{H}(Fl_{n+1}).
\end{equation}
\end{prop}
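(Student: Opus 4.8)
The plan is to deduce Proposition~\ref{prop:lng} directly from Corollary~\ref{cor:wk-lng} by transporting the identity \eqref{eq:wk-lng} for $[\CO_{\QG(\lng)}]$ in $K_{H}(\QG)$ back to $QK_{H}(Fl_{n+1})$ along Kato's isomorphism $\Phi$. The first step is to establish the key intertwining identity $\Phi(\CF^{k}_{p}) = \FF^{k}_{p}$ for all $0 \le p \le k \le n+1$. For this, I would start from the basic relation quoted from \cite[Proposition~5.2]{MaNS}, namely
\begin{equation*}
\Phi\!\left(\bullet \star \tfrac{1}{1-Q_{j-1}}[\CO_{Fl_{n+1}}(-\eps_{j})]\right) = \Phi(\bullet) \otimes [\CO_{\QG}(\lng\eps_{j})],
\end{equation*}
valid for $1 \le j \le n+1$ with $Q_{0}=Q_{n+1}:=0$. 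Applying this repeatedly along a fixed subset $J = \{j_{1} < \dots < j_{p}\} \subset [k]$, the $\star$-product $\sprod_{j \in J}[\CO_{Fl_{n+1}}(-\eps_{j})]$ becomes, under $\Phi$, the tensor product $\bigotimes_{j\in J}[\CO_{\QG}(\lng\eps_{j})] = [\CO_{\QG}(\lng\eps_{J})]$, but each consecutive pair $j,\,j+1 \in J$ forces the introduction of the factor $\frac{1}{1-Q_{j}}$ on the left, which matches the denominator $\prod_{j,\,j+1\in J}\frac{1}{1-Q_{j}}$ appearing in $\CF^{k}_{p}$; meanwhile, by \eqref{eq:push}, each Novikov factor $Q_{j}^{-1}$ gets converted to the shift operator $\st_{j}$, and by \eqref{eq:gshift} these shift operators commute past the tensoring with line bundle classes, so that $\frac{1}{1-Q_{j}}$ turns into $\frac{1}{1-\st_{j}}$. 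Expanding the geometric series and comparing with Remark~\ref{rem:st}, the only subtlety is bookkeeping: one must check that $\frac{1}{1-Q_{j}}$ appears on the left if and only if both $j$ and $j+1$ lie in $J$, i.e.\ precisely when $j \in J^{-}$ computed relative to the full set — here one uses that $j \in J^{-}$ (as defined in \eqref{eq:fnctn}, recalled as $\{j \notin J,\ j+1\in J\}$ in the displayed formula for $\FF^{k}_{p}$) must be reconciled with the condition "$j,\,j+1\in J$" appearing in $\CF^{k}_{p}$; since $\lng$ reverses the indexing ($\lng\eps_{j} = \eps_{n+2-j}$), the descent set $J^{-}$ of $\eps_{J}$ under $\lng$ corresponds exactly to consecutive pairs inside $J$, and this index-reversal is the point that needs care.

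Once $\Phi(\CF^{k}_{p}) = \FF^{k}_{p}$ is in hand, the second step is immediate. Apply $\Phi^{-1}$ to \eqref{eq:wk-lng}. The left-hand side $[\CO_{\QG(\lng)}] = [\CO_{\QG(\lng t_{0})}]$ is, by the description of $\Phi$ recalled in Section~\ref{sec0301} (with $\mu = 0$, $w = \lng$, $\xi = 0$), the image of $[\CO^{\lng}]$. On the right-hand side, $\Phi^{-1}$ turns the tensor product $\bigotimes_{k=1}^{n}(\cdots)$ into the $\star$-product $\sprod_{k=1}^{n}(\cdots)$ via the commutative diagram \eqref{eq:qdiagram} combined with the generation statement recalled above (that $QK_{H}(Fl_{n+1})$ is generated as a $\star$-algebra by the $[\CO_{Fl_{n+1}}(-\eps_{j})]$, so that $\Phi$ is compatible with the respective products built from these generators); and it turns each $\FF^{k}_{p}$ into $\CF^{k}_{p}$, and each scalar $\be^{p\eps_{n+1-k}} \in \BZ[P] \cong R(H)$ into $\be^{-p\eps_{n+1-k}}$ because $\Phi$ sends $\be^{\mu}$-multiples to $\be^{-\mu}$-multiples (the sign twist by $-\lng$, or rather the $\mu \mapsto -\mu$ convention, recorded in Section~\ref{sec0301}). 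Collecting these, \eqref{eq:wk-lng} becomes exactly the claimed formula
\begin{equation*}
[\CO^{\lng}] = \sprod_{k=1}^{n}\Bigg(\sum_{p=0}^{k}(-1)^{p}\be^{-p\eps_{n+1-k}}\CF^{k}_{p}\Bigg).
\end{equation*}

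The main obstacle I anticipate is the first step, specifically the careful verification that iterating the single-bundle intertwining relation over an arbitrary subset $J$ produces precisely the stated rational function $\prod_{j,\,j+1\in J}\frac{1}{1-Q_{j}}$ — neither more nor fewer denominators — and that the resulting operator expression, once the geometric series in the $\st_{j}$ are expanded, is well-defined in the sense of Remark~\ref{rem:st} (i.e.\ all the $\st_{j}$ can legitimately be pulled in front of a single line-bundle tensor factor, with the weights adding up correctly). The convergence of the geometric series $\frac{1}{1-\st_{j}} = \sum_{m\ge0}\st_{j}^{m}$ in the topological $\BZ[P]$-module $K_{H}(\QG)$ is harmless since $\st_{j}$ corresponds to the positive Novikov direction $Q_{j}$, but one should note explicitly that it is the $Q$-adic (equivalently, $\st$-adic) completeness that makes $\CF^{k}_{p}$ a bona fide element of $QK_{H}(Fl_{n+1}) = K_{H}(Fl_{n+1})\otimes_{R(H)}R(H)\bra{Q}$ and $\FF^{k}_{p}$ a bona fide element of $K_{H}(\QG)$. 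Everything else — the conversion $\be^{\mu}\leftrightarrow\be^{-\mu}$, the conversion $Q^{\xi}\leftrightarrow\st_{\xi}$, and the passage from $\otimes$ to $\star$ — is formal given the cited results \eqref{eq:qdiagram}, \eqref{eq:push}, \eqref{eq:gshift}, and the algebra-generation statement.
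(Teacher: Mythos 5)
Your strategy is the paper's own: first establish $\Phi(\CF^{k}_{p}) = \FF^{k}_{p}$ by iterating the intertwining relation from \cite[Proposition~5.2]{MaNS} together with \eqref{eq:push} and \eqref{eq:gshift}, then apply $\Phi^{-1}$ to Corollary~\ref{cor:wk-lng}, using that $\Phi$ converts scalars $\be^{\mu}$ into $\be^{-\mu}$ and the $\otimes$-product of the factors (each built from line-bundle classes) into the $\star$-product. The second step and the sign conversion are handled correctly.

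However, your resolution of the one subtlety you yourself single out --- reconciling the index set $J^{-}$ in $\FF^{k}_{p}$ with the condition ``$j,\,j+1\in J$'' in $\CF^{k}_{p}$ --- is wrong. These two sets are not identified by any index reversal: $J^{-}=\{j \mid j\notin J,\ j+1\in J\}$ and $\{j \mid j\in J,\ j+1\in J\}$ are \emph{disjoint}, with union $\{j \mid j+1\in J\}$, and $\lng$ is irrelevant to this point (it enters only through the twist $\lng\eps_{J}$ in the line bundle, not through the $Q$- or $\st$-variables); moreover one set indexes factors $\frac{1}{1-\st_{j}}$ and the other factors $(1-\st_{j})$, so they cannot simply ``match.'' The correct bookkeeping is: applying the cited relation once for each $j\in J$ introduces a factor $\frac{1}{1-Q_{j-1}}$ for \emph{every} $j\in J$ with $j\ge 2$ (not only for consecutive pairs), so that
\begin{equation*}
\Phi\Bigl(\sprod_{j\in J}[\CO_{Fl_{n+1}}(-\eps_{j})]\Bigr)
=\prod_{j:\ j+1\in J}(1-\st_{j})\,[\CO_{\QG}(\lng\eps_{J})],
\end{equation*}
and the denominators $\prod_{j,\,j+1\in J}\frac{1}{1-Q_{j}}$ built into $\CF^{k}_{p}$, converted to $\prod_{j,\,j+1\in J}\frac{1}{1-\st_{j}}$ via \eqref{eq:push} and moved past the line bundle via \eqref{eq:gshift}, cancel exactly those factors $(1-\st_{j})$ with $j\in J$, leaving precisely $\prod_{j\in J^{-}}(1-\st_{j})$. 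With this correction the verification of $\Phi(\CF^{k}_{p})=\FF^{k}_{p}$ goes through, and the rest of your argument coincides with the paper's proof.
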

%
%=======================%
% START SUBSECTION 0302 %
%=======================%
%
\subsection{Proof of the main result for $w = \lng$.}
\label{sec0302}

We know from \cite[Section~6]{MaNS} that 
there exists an $R(H)\bra{Q}$-algebra isomorphism 
%
%%%%%%%%%%%%%%%
%%% eq:PsiQ %%%
%%%%%%%%%%%%%%%
%
\begin{equation} \label{eq:PsiQ}
\Psi^{Q}:R(H)\bra{Q}[x_{1},\dots,x_{n},x_{n+1}]/\CI^{Q}
\stackrel{\sim}{\rightarrow} QK_{H}(Fl_{n+1}), 
\end{equation}
where $\CI^{Q}$ denotes the ideal of 
$R(H)\bra{Q}[x_{1},\dots,x_{n},x_{n+1}]$ 
generated by 
%
%%%%%%%%%%%%%
%%% eq:IQ %%%
%%%%%%%%%%%%%
%
\begin{equation} \label{eq:IQ}
\sum_{
   \begin{subarray}{c}
   J \subset [n+1] \\[1mm]
   |J|=l
   \end{subarray}} \, 
\prod_{j \in J, j+1 \notin J} (\ol{1-Q_{j}})
\prod_{j \in J} (1-x_{j}) - 
\sum_{
   \begin{subarray}{c}
   J \subset [n+1] \\[1mm]
   |J|=l
   \end{subarray}}\be^{\eps_{J}}, 
\qquad 1 \le l \le n+1, 
\end{equation}
with $[n+1]=\bigl\{1,2,\dots,n+1\bigr\}$, and 
\begin{equation*}
\ol{1-Q_{j}}=
\begin{cases}
1-Q_{j} & \text{if $1 \le j \le n$}, \\
1 & \text{if $j=n+1$}.
\end{cases}
\end{equation*}
Also, we know that the isomorphism $\Psi^{Q}$ maps the residue class 
$(\ol{1-Q_j})(1-x_{j}) \ \mathrm{mod} \ \CI^{Q}$ to $[\CO_{Fl_{n+1}}(-\eps_{j})]$ 
for $1 \le j \le n+1$; 
note that $-\eps_{n+1} = \eps_1 + \cdots + \eps_n$. 

In the following, by $1 - y_{j} = \be^{-\eps_{j}}$ for $1 \le j \le n$, we identify 
\begin{enu}
\item[(ID1)]
$R(H) \cong \BZ[\be^{\pm \eps_{1}},\dots,\be^{\pm \eps_{n}}]$ 
with $\BZ[(1-y_{1})^{\pm 1},\dots,(1-y_{n})^{\pm 1}]$, and 

\item[(ID2)] $R(H)\bra{Q}[x_{1},\dots,x_{n},x_{n+1}]$ with
\begin{equation} \label{eq:ring}
\BZ\bra{Q}[x_{1},\dots,x_{n},x_{n+1}][(1-y_{1})^{\pm 1},\dots,(1-y_{n})^{\pm 1}].
\end{equation}
\end{enu} 

For $0 \le p \le k \le n$, we define
\begin{equation} \label{eq:F}
F_{p}^{k}(x_{1},\dots,x_{n},x_{n+1}):=
\sum_{
   \begin{subarray}{c}
   J \subset [k] \\[1mm]
   |J|=p
   \end{subarray}} \, 
\prod_{j \in J, j+1 \notin J} (1-Q_{j}) \prod_{j \in J} (1-x_{j}) 
\in \BZ\bra{Q}[x_{1},\dots,x_{n},x_{n+1}]. 
\end{equation}
It easily follows that for $0 \le p \le k \le n$, 
\begin{equation*}
\Psi^{Q}(F_{p}^{k}(x_{1},\dots,x_{n},x_{n+1}) \ \mathrm{mod} \ \CI^{Q}) = 
\CF^{k}_{p} \in QK_{H}(Fl_{n+1}). 
\end{equation*}
Therefore, we deduce from Proposition~\ref{prop:lng} that the residue class 
\begin{equation*}
\prod_{k=1}^{n}
\Bigg( \sum_{p=0}^{k}(-1)^{p}(1-y_{n+1-k})^{p}F_{p}^{k}(x_{1},\dots,x_{n},x_{n+1}) \Bigg) \mod \CI^{Q}
\end{equation*}
is mapped under the $R(H)\bra{Q}$-algebra 
isomorphism $\Psi^{Q}$ to 
\begin{equation*}
\prod_{k=1}^{n}
\Bigg( \sum_{p=0}^{k}(-1)^{p}\be^{-p \eps_{n+1-k}} \CF^{k}_{p} \Biggr) = [\CO^{\lng}] \in QK_{H}(Fl_{n+1}); 
\end{equation*}
recall the identification $1-y_{j} = \be^{-\eps_{j}}$ for $1 \le j \le n$. 

Now, let us set 
\begin{equation} \label{eq:FGlng}
\begin{split}
\FG_{\lng}^{Q}(x,y) & :=
\prod_{k=1}^{n}\Bigg( \sum_{l=0}^{k}(-1)^{l}(1-y_{n+1-k})^{l}F_{l}^{k}(x_{1},\dots,x_{n},x_{n+1}) \Bigg) \\
& \in \BZ\bra{Q}[x_{1},\dots,x_{n},x_{n+1}][(1-y_{1})^{\pm 1},\dots,(1-y_{n})^{\pm 1}]; 
\end{split}
\end{equation}
see Section~\ref{sec0401} below. 
% Then, from the above, we have 
%
% \begin{equation*}
% \Psi^{Q}(\FG_{\lng}^{Q}(x,y) \ \mathrm{mod} \ \CI^{Q}) = [\CO^{\lng}] \in QK_{H}(Fl_{n+1}).
% \end{equation*}
%
Then, from the above, we obtain the following.
%
%%%%%%%%%%%%%%%
%%% thm:lng %%%
%%%%%%%%%%%%%%%
%
\begin{thm} \label{thm:lng}
Let $\lng \in W=S_{n+1}$ be the longest element. 
Then, under the identification {\rm (ID2)}, the following equality holds in $QK_{H}(Fl_{n+1})$\,{\rm:}
\begin{equation}
\Psi^{Q}(\FG_{\lng}^{Q}(x,y) \ \mathrm{mod} \ \CI^{Q}) = [\CO^{\lng}] \in QK_{H}(Fl_{n+1}), 
\end{equation}
where $\Psi^{Q}$ is the $R(H)\bra{Q}$-algebra isomorphism in \eqref{eq:PsiQ}. 
\end{thm}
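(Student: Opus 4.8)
The plan is to simply assemble the pieces that have already been laid out in Section~\ref{sec0301} and the discussion immediately preceding the statement of Theorem~\ref{thm:lng}. First I would invoke Proposition~\ref{prop:lng}, which gives the identity
$[\CO^{\lng}] = \sprod_{k=1}^{n}\bigl(\sum_{p=0}^{k}(-1)^{p}\be^{-p\eps_{n+1-k}}\CF^{k}_{p}\bigr)$ in $QK_{H}(Fl_{n+1})$, where $\sprod$ is the product with respect to quantum multiplication $\star$. Next I would recall that $\Psi^{Q}$ is an $R(H)\bra{Q}$-\emph{algebra} isomorphism, so it intertwines ordinary multiplication in the quotient ring $R(H)\bra{Q}[x_{1},\dots,x_{n+1}]/\CI^{Q}$ with quantum multiplication $\star$ in $QK_{H}(Fl_{n+1})$; in particular a product of residue classes maps to the $\star$-product of the images.

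The remaining verification is the pointwise claim that $\Psi^{Q}(F^{k}_{p}(x_{1},\dots,x_{n+1})\bmod\CI^{Q}) = \CF^{k}_{p}$ for $0\le p\le k\le n$, which has in fact already been asserted in the text (``It easily follows that \dots''). I would spell this out by comparing the defining sum for $F^{k}_{p}$ in \eqref{eq:F} with that for $\CF^{k}_{p}$: both are sums over $J\subset[k]$ with $|J|=p$; on the $F$-side each term is $\prod_{j\in J,\,j+1\notin J}(1-Q_{j})\prod_{j\in J}(1-x_{j})$, and since $\Psi^{Q}$ sends $(\ol{1-Q_{j}})(1-x_{j})\bmod\CI^{Q}$ to $[\CO_{Fl_{n+1}}(-\eps_{j})]$ for $1\le j\le n+1$, one rewrites $\prod_{j\in J}(1-x_{j})$ as $\prod_{j\in J}(1-x_{j})$ grouped so that each factor carries its companion $(\ol{1-Q_{j}})$, at the cost of introducing compensating factors $1/(1-Q_{j})$ precisely for those $j$ with $j,j+1\in J$ (these are the consecutive runs inside $J$). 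This bookkeeping of the $Q$-factors — matching $\prod_{j\in J,\,j+1\notin J}(1-Q_{j})$ against $\prod_{j\in J}(\ol{1-Q_{j}})$ up to $\prod_{j,j+1\in J}(1-Q_{j})^{-1}$ — is the only genuinely computational point, and it is exactly the combinatorial identity that makes the $\star$-product formula for $\CF^{k}_{p}$ (with its $1/(1-Q_{j})$ for $j,j+1\in J$) match the polynomial $F^{k}_{p}$.

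Finally I would apply $\Psi^{Q}$ to the residue class $\prod_{k=1}^{n}\bigl(\sum_{l=0}^{k}(-1)^{l}(1-y_{n+1-k})^{l}F^{k}_{l}\bigr)\bmod\CI^{Q}$, which by definition \eqref{eq:FGlng} is $\FG^{Q}_{\lng}(x,y)\bmod\CI^{Q}$ under the identification (ID2). Using that $\Psi^{Q}$ is an algebra homomorphism and the identification $1-y_{j}=\be^{-\eps_{j}}$ for $1\le j\le n$, the image is $\sprod_{k=1}^{n}\bigl(\sum_{l=0}^{k}(-1)^{l}\be^{-l\eps_{n+1-k}}\CF^{k}_{l}\bigr)$, which by Proposition~\ref{prop:lng} equals $[\CO^{\lng}]$. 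This completes the proof.

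I do not expect a serious obstacle here: the deep input (the explicit formula for $\SQG{\lng}$ in Corollary~\ref{cor:wk-lng}, transported through Kato's isomorphism $\Phi$ to Proposition~\ref{prop:lng}) is already available, and what remains is the translation across $\Psi^{Q}$. The only place requiring care is keeping the role of the shifts $\be^{-p\eps_{n+1-k}}$ versus $\be^{p\eps_{n+1-k}}$ straight — note the sign flip between the $K_{H}(\QG)$ side (Corollary~\ref{cor:wk-lng}, with $\be^{p\eps_{n+1-k}}$) and the $QK_{H}(Fl_{n+1})$ side (Proposition~\ref{prop:lng}, with $\be^{-p\eps_{n+1-k}}$), which comes from the twist by $-\lng$ in Kato's map; since the final statement is phrased entirely on the $QK_{H}$ side, it is Proposition~\ref{prop:lng}'s normalization that one uses, matched to $1-y_{j}=\be^{-\eps_{j}}$.
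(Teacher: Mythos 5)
Your proposal is correct and follows essentially the same route as the paper: the paper likewise defines $F^{k}_{p}$, notes that $\Psi^{Q}(F^{k}_{p}\bmod\CI^{Q})=\CF^{k}_{p}$ (the $Q$-factor bookkeeping you spell out, using that $\ol{1-Q_j}=1-Q_j$ for $j\in J\subset[k]$ with $k\le n$), and then applies the algebra isomorphism $\Psi^{Q}$ to the product defining $\FG^{Q}_{\lng}$ and invokes Proposition~\ref{prop:lng}. Your only imprecision is attributing the sign flip $\be^{p\eps_{n+1-k}}\mapsto\be^{-p\eps_{n+1-k}}$ to the $-\lng$ twist rather than to the inversion $\be^{\mu}\mapsto\be^{-\mu}$ built into Kato's map $\Phi$, but this is a parenthetical remark that does not affect the argument.
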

%
%==================%
% START SECTION 04 %
%==================%
%
\section{Proof of the main result.} \label{sec:main}
%
%=======================%
% START SUBSECTION 0401 %
%=======================%
%
\subsection{Quantum double Grothendieck polynomials.}
\label{sec0401}

First we recall from \cite[Section~8]{LM} 
the Demazure operators $\pi^{(y)}_{i}$ 
acting on the $y$-variables; note that 
we use Demazure operators acting on the $y$-variables, 
while in \cite{LM} they are acting on the $x$-variables. 
Let $i \in [n]=\{1,2,\dots,n\}$. 
We define the Demazure operator $\pi^{(y)}_{i}$ 
on (the fraction field $\Frac(\BZ[y_1^{\pm 1},\dots,y_n^{\pm 1}])$ of) 
$\BZ[y_1^{\pm 1},\dots,y_n^{\pm 1}]$ by:
\begin{equation*}
\pi^{(y)}_{i}=\id + \frac{1-y_{i}}{y_{i}-y_{i+1}}(\id - s_{i});
\end{equation*}
we will further extend it to 
\begin{equation}
\BZ\bra{Q}[x_{1},\dots,x_{n},x_{n+1}] \otimes 
\Frac(\BZ[y_1^{\pm 1},\dots,y_n^{\pm 1}])
\end{equation}
by $\BZ\bra{Q}[x_{1},\dots,x_{n},x_{n+1}]$-linearity. 
Also, for $w \in W=S_{n+1}$ 
with reduced expression $w = s_{i_{1}} \cdots s_{i_{l}}$, we define 
$\pi^{(y)}_{w}:=\pi^{(y)}_{i_1} \cdots \pi^{(y)}_{i_l}$; it is well-known 
that this definition does not depend on the choice of a reduced expression for $w$. 

Now, let us recall from \cite[Section~8]{LM} 
the definition of quantum double Grothendieck polynomials. 
For the longest element $\lng \in W=S_{n+1}$, 
we define $\FG_{\lng}^{Q}(x,y)$ as in \eqref{eq:FGlng}.
Note that the polynomial $\FG_{\lng}^{Q}(x,y)$ is 
the image under the quantization map $\Qq$ (with respect to 
the $x$-variables) of the ordinary double Grothendieck polynomial 
\begin{equation*}
\FG_{\lng}(x,y):=
\prod_{k=1}^{n}\Bigg( \sum_{l=0}^{k} (-1)^{l}(1-y_{n+1-k})^{l}f_{l}^{k}(x_{1},\dots,x_{n},x_{n+1}) \Bigg),
\end{equation*}
where 
\begin{equation*}
f_{l}^{k}(x_{1},\dots,x_{n},x_{n+1}):=
\sum_{
   \begin{subarray}{c}
   J \subset [k] \\[1mm]
   |J|=l
   \end{subarray}} \, \prod_{j \in J}(1-x_{j}), 
\end{equation*}
with $[k] = \{1, 2, \ldots, k\}$. 
For an arbitrary $w \in W=S_{n+1}$, 
the quantum double Grothendieck polynomial $\FG_{w}^{Q}(x,y)$ 
is defined by 
\begin{equation*}
\FG_{w}^{Q}(x,y):=\pi_{w\lng}^{(y)}\FG_{\lng}^{Q}(x,y)
\in \BZ\bra{Q}[x_{1},\dots,x_{n},x_{n+1}][(1-y_{1})^{\pm 1},\dots,(1-y_{n})^{\pm 1}].
\end{equation*}
\begin{rem}
In our definition of quantum double Grothendieck polynomials $\FG_{w}^{Q}(x,y)$, 
the roles of the variables $x$ and $y$ are interchanged from those in \cite[Section~8]{LM}. 
In addition, we use the Demazure operator $\pi^{(y)}_{w\lng}$ instead of $\pi^{(y)}_{w^{-1}\lng}$. 
Hence our polynomial $\FG_{w}^{Q}(x,y)$ coincides with $\FG_{w^{-1}}^{q}(y,x)$ in the notation of 
\cite[Section~8]{LM}.
\end{rem}
\begin{rem}
Our polynomial $\FG_{w}^{Q}(x,y)$ coincides with the one in \cite[Appendix B]{MaNS}; 
this is because the quantization map $\Qq$ (with respect to the $x$-variables) 
commute with the Demazure operators $\pi^{(y)}_{i}$, $1 \leq i \leq n$, and we have 
$\FG_{w}(y,x) = \FG_{w^{-1}}(x,y)$ for ordinary double Grothendieck polynomials.
\end{rem}
%
%=======================%
% START SUBSECTION 0402 %
%=======================%
%
\subsection{Quantum left Demazure operators.}
\label{sec0402}
Let $K_{H}(Fl_{n+1})$ be the (ordinary) $H$-equivariant $K$-theory ring 
of the (full) flag manifold $Fl_{n+1}$ of type $A_{n}$. The ring $K_{H}(Fl_{n+1})$ 
admits a left action of the Weyl group $W=S_{n+1}$, given by the left multiplication 
on the flag manifold $Fl_{n+1}= SL_{n+1}(\BC)/B$, where $B$ is the Borel subgroup of 
$G = SL_{n+1}(\BC)$ consisting of the upper triangular matrices in $G = SL_{n+1}(\BC)$; 
for each $w \in W = S_{n+1}$, let $w^{L}$ denote the corresponding ring 
automorphism of $K_{H}(Fl_{n+1})$. 
By using the ring automorphisms $s_{i}^{L}$, $i \in [n]$, of $K_{H}(Fl_{n+1})$, 
one can define the (dual) left Demazure operators $\delta_{i}^{\vee}$, $i \in [n]$, 
on $K_{H}(Fl_{n+1})$ by 
\begin{equation}
\delta_{i}^{\vee}:=
\frac{1}{ 1-\be^{-(\eps_{i}-\eps_{i+1})} }
(\id - \be^{-(\eps_{i}-\eps_{i+1})}s_{i}^{L}).
\end{equation}

Here we recall the well-known presentation:
\begin{equation*}
K_{H}(Fl_{n+1}) =
(\BZ[\be^{\pm \eps_1},\dots,\be^{\pm \eps_n}] \otimes 
 \BZ[x_{1}^{\pm 1},\dots,x_{n}^{\pm 1}])/\CI,
\end{equation*}
where % $\BZ[X]:=\BZ[x_{1}^{\pm 1},\dots,x_{n}^{\pm 1}]$, and 
$\CI$ is the ideal of $\BZ[\be^{\pm \eps_1},\dots,\be^{\pm \eps_n}] \otimes \BZ[x_{1}^{\pm 1},\dots,x_{n}^{\pm 1}]$
generated by $\iota(f) \otimes 1 - 1 \otimes f$, $f \in \BZ[x_{1}^{\pm 1},\dots,x_{n}^{\pm 1}]^{S_{n}}$, 
with $\iota:\BZ[x_{1}^{\pm 1},\dots,x_{n}^{\pm 1}] \rightarrow \BZ[\be^{\pm \eps_1},\dots,\be^{\pm \eps_n}]$ 
the natural isomorphism given by $\iota(x_{i}^{\pm 1}) = \be^{\pm \eps_i}$, $1 \le i \le n$; 
here we identify the Laurent polynomial ring $\BZ[x_{1}^{\pm 1},\ldots,x_{n}^{\pm 1}]$ 
with the quotient of $\BZ[x_{1}, \ldots, x_{n}, x_{n+1}]$ by the relation $x_{1} \cdots x_{n} x_{n+1} = 1$. 
We know from (the dual version of) \cite[Proposition~9.5]{MNS} that 
under the presentation above of $K_{H}(FL_{n+1})$, 
the action of the (dual) left Demazure operator $\delta_{i}^{\vee}$ on $K_{H}(Fl_{n+1})$ 
coincides with that on the quotient ring 
$(\BZ[\be^{\pm \eps_1},\dots,\be^{\pm \eps_n}] \otimes \BZ[x_{1}^{\pm 1},\dots,x_{n}^{\pm 1}])/\CI$ 
induced by the (dual) Demazure operator on the left component 
$\BZ[\be^{\pm \eps_1},\dots,\be^{\pm \eps_n}]$, given by 
\begin{equation} \label{eq:dem1}
\frac{1}{ 1-\be^{-(\eps_{i}-\eps_{i+1})} }
(\id - \be^{-(\eps_{i}-\eps_{i+1})}s_{i}), 
\end{equation}
where $s_{i}$ denotes the natural action of the simple reflection $s_{i} \in W=S_{n+1}$ 
on $R(H) \cong \BZ[\be^{\pm \eps_1},\dots,\be^{\pm \eps_n}]$. 

\begin{rem} \label{rem:cong}
It is easy to see that 
under the identification (ID1) in Section~\ref{sec0302},
the action of the operator \eqref{eq:dem1} on 
$\BZ[\be^{\pm \eps_1},\dots,\be^{\pm \eps_n}]$ coincides with 
that of the Demazure operator $\pi^{(y)}_{i}$ on 
$\BZ[(1-y_{1})^{\pm 1},\dots,(1-y_{n})^{\pm 1}]$. 
\end{rem}

Let $QK_{H}(Fl_{n+1})=K_{H}(Fl_{n+1}) \otimes_{R(H)} R(H)\bra{Q}$ be 
the $H$-equivariant quantum $K$-theory ring of $Fl_{n+1}$. 
We know from \cite[Section~8]{MNS} that for each $w \in W = S_{n+1}$, 
the ring automorphism $w^{L}$ of $K_{H}(Fl_{n+1})$ extends by $\BZ\bra{Q}$-linearity to 
the ring automorphism of $QK_{H}(Fl_{n+1})$, and hence the (dual) left 
Demazure operators $\delta_{i}^{\vee}$, $1 \leq i \leq n$, on $K_{H}(Fl_{n+1})$ 
extend by $\BZ\bra{Q}$-linearity to $QK_{H}(Fl_{n+1})$. 
Moreover, we know from \cite[Proposition~8.3]{MNS} 
the following equalities in $QK_{H}(Fl_{n+1})$ for $1 \leq i \leq n$: 
\begin{equation} \label{eq:del-com}
\delta_{i}^{\vee}(\bullet \star \CO_{Fl_{n+1}}(\nu)) = 
\delta_{i}^{\vee}(\bullet) \star \CO_{Fl_{n+1}}(\nu)
\end{equation}
for $\nu \in P$ and $\bullet \in QK_{H}(Fl_{n+1})$, and 
\begin{equation} \label{eq:delta}
\delta_{i}^{\vee}([\CO^{w}])=
 \begin{cases}
 [\CO^{s_iw}] & \text{if $s_{i}w < w$}, \\
 [\CO^{w}] & \text{if $s_{i}w > w$},
 \end{cases}
\end{equation}
for $w \in W = S_{n+1}$.  
%
%=======================%
% START SUBSECTION 0403 %
%=======================%
%
\subsection{Proof of the main result.}
\label{sec0403}
Recall the $R(H)\bra{Q}$-algebra isomorphism $\Psi^{Q}$ from \eqref{eq:PsiQ}.
Also, recall the identification (ID1) and (ID2) from Section~\ref{sec0302}.
Let $i \in [n] = \{1, \ldots, n\}$. 
If we extend the Demazure operator $\pi^{(y)}_{i}$ on 
$\BZ[(1-y_{1})^{\pm 1},\dots,(1-y_{n})^{\pm 1}]$ to 
$\BZ\bra{Q}[x_{1},\dots,x_{n},x_{n+1}][(1-y_{1})^{\pm 1},\dots,(1-y_{n})^{\pm 1}]$ by
$\BZ\bra{Q}[x_{1},\dots,x_{n},x_{n+1}]$-linearity, then the (resulting) Demazure operator 
$\pi^{(y)}_{i}$ stabilizes the ideal $\CI^{Q}$ generated by the elements in \eqref{eq:IQ}, 
since the elementary symmetric functions 
$\sum_{J \subset [n+1], \, |J|=l} \, \be^{\eps_{J}}$, $1 \leq l \leq n+1$, 
are invariant under the natural action of $s_i$ 
and the Demazure operator $\pi^{(y)}_{i}$ satisfies the (twisted) Leibniz rule. 
Hence it 
induces a $\BZ\bra{Q}[x_{1},\dots,x_{n},x_{n+1}]$-linear operator on the quotient ring 
$\BZ\bra{Q}[x_{1},\dots,x_{n},x_{n+1}][(1-y_{1})^{\pm 1},\dots,(1-y_{n})^{\pm 1}]/\CI^{Q}$. 
Also, by \eqref{eq:del-com} and the fact that 
$\Psi^{Q}$ maps the residue class of $\ol{(1-Q_{j})}(1-x_{j})$
modulo $\CI^{Q}$ to $[\CO_{Fl_{n+1}}(-\eps_{j})]$, $1 \le j \le n+1$,
it follows that the quantum (dual) left Demazure operator $\delta_{i}^{\vee}$ on 
$QK_{H}(Fl_{n+1})$ is a $\BZ\bra{Q}[x_{1},\dots,x_{n},x_{n+1}]$-linear operator.
Since $QK_{H}(Fl_{n+1})$ and the quotient ring 
$\BZ\bra{Q}[x_{1},\dots,x_{n},x_{n+1}][(1-y_{1})^{\pm 1},\dots,(1-y_{n})^{\pm 1}]/\CI^{Q}$
are both generated by (the image under the quotient map of) 
$\BZ[\be^{\pm \eps_{1}},\dots,\be^{\pm \eps_{n}}] \cong 
 \BZ[(1-y_{1})^{\pm 1},\dots,(1-y_{n})^{\pm 1}]$ over 
$\BZ\bra{Q}[x_{1},\dots,x_{n},x_{n+1}]$, and 
since the (dual) left Demazure operator $\delta_{i}^{\vee}$ and 
the Demazure operator $\pi^{(y)}_{i}$ coincide on 
$\BZ[\be^{\pm \eps_{1}},\dots,\be^{\pm \eps_{n}}] \cong 
 \BZ[(1-y_{1})^{\pm 1},\dots,(1-y_{n})^{\pm 1}]$ by Remark~\ref{rem:cong}, 
together with the comment on $\delta_{i}^{\vee}$ preceding it, 
we deduce that the operators $\delta_{i}^{\vee}$ and $\pi^{(y)}_{i}$ coincide on 
\begin{equation*}
QK_{H}(Fl_{n+1}) \cong 
\BZ\bra{Q}[x_{1},\dots,x_{n},x_{n+1}][(1-y_{1})^{\pm 1},\dots,(1-y_{n})^{\pm 1}]/\CI^{Q}; 
\end{equation*}
recall the identification (ID2). 
Namely, we obtain the following commutative diagram for all $1 \leq i \leq n$: 
\begin{equation} \label{eq:CD}
\begin{CD}
\BZ\bra{Q}[x_{1},\dots,x_{n},x_{n+1}][(1-y_{1})^{\pm 1},\dots,(1-y_{n})^{\pm 1}]/\CI^{Q}
@>{\Psi^{Q}}>{\cong}> QK_{H}(Fl_{n+1}) \\
@V{\pi^{(y)}_{i}}VV @VV{\delta_{i}^{\vee}}V \\
\BZ\bra{Q}[x_{1},\dots,x_{n},x_{n+1}][(1-y_{1})^{\pm 1},\dots,(1-y_{n})^{\pm 1}]/\CI^{Q}
@>{\Psi^{Q}}>{\cong}> QK_{H}(Fl_{n+1}).
\end{CD}
\end{equation}

Now we are ready to state and prove the main result of this paper. 
%
%%%%%%%%%%%%%%%%
%%% thm:main %%%
%%%%%%%%%%%%%%%%
%
\begin{thm} \label{thm:main}
Let $w$ be an arbitrary element of $W = S_{n+1}$. 
Then, under the identification {\rm (ID2)}, 
the following equality holds in $QK_{H}(Fl_{n+1})$\,{\rm :}
\begin{equation*}
\Psi^{Q}(\FG_{w}^{Q}(x,y) \ \mathrm{mod} \ \CI^{Q}) = [\CO^{w}], 
\end{equation*}
where $\Psi^{Q}$ is the $R(H)\bra{Q}$-algebra isomorphism in \eqref{eq:PsiQ}.
\end{thm}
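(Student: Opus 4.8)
The plan is to reduce the general case to the case $w = \lng$, which is already established in Theorem~\ref{thm:lng}, by applying a sequence of (dual) left Demazure operators and using the commutative diagram \eqref{eq:CD}. First I would fix $w \in W = S_{n+1}$ and choose a reduced expression $w\lng = s_{i_1} \cdots s_{i_l}$ for the element $w\lng$, so that $\pi^{(y)}_{w\lng} = \pi^{(y)}_{i_1} \cdots \pi^{(y)}_{i_l}$ by the well-definedness recalled in Section~\ref{sec0401}. By the very definition of the quantum double Grothendieck polynomial, $\FG_{w}^{Q}(x,y) = \pi^{(y)}_{w\lng} \FG_{\lng}^{Q}(x,y)$, so applying $\Psi^{Q}$ to the residue class modulo $\CI^{Q}$ and repeatedly invoking the commutativity of \eqref{eq:CD} gives
\begin{equation*}
\Psi^{Q}(\FG_{w}^{Q}(x,y) \bmod \CI^{Q}) = \delta_{i_1}^{\vee} \cdots \delta_{i_l}^{\vee} \bigl( \Psi^{Q}(\FG_{\lng}^{Q}(x,y) \bmod \CI^{Q}) \bigr) = \delta_{i_1}^{\vee} \cdots \delta_{i_l}^{\vee} ([\CO^{\lng}]),
\end{equation*}
where the last equality is Theorem~\ref{thm:lng}.

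Next I would compute $\delta_{i_1}^{\vee} \cdots \delta_{i_l}^{\vee} ([\CO^{\lng}])$ using the formula \eqref{eq:delta}, which says that $\delta_{i}^{\vee}([\CO^{v}])$ equals $[\CO^{s_i v}]$ when $s_i v < v$ and $[\CO^{v}]$ otherwise. The point is that $s_{i_1} \cdots s_{i_l}$ is a reduced word for $w\lng$, and since $\lng$ is the longest element, left-multiplication of $\lng$ by the simple reflections $s_{i_l}, s_{i_{l-1}}, \ldots, s_{i_1}$ (in that order, reading the reduced word for $w\lng$ from the right) strictly decreases length at every step; more precisely one has $\ell(s_{i_1} \cdots s_{i_r} \lng) = \ell(\lng) - r$ for each $0 \le r \le l$, because $\ell(v\lng) = \ell(\lng) - \ell(v)$ for all $v \in W$ and $s_{i_1}\cdots s_{i_r}$ is reduced. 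Hence at each application of a $\delta^{\vee}$ the decreasing branch of \eqref{eq:delta} is selected, so that $\delta_{i_r}^{\vee} \cdots \delta_{i_l}^{\vee}([\CO^{\lng}]) = [\CO^{s_{i_r} \cdots s_{i_{l}} \ldots}]$ — working this out carefully, after all $l$ operators are applied one lands on $[\CO^{s_{i_1} \cdots s_{i_l} \lng}] = [\CO^{(w\lng)\lng}] = [\CO^{w}]$, using $\lng^2 = e$. I would state this telescoping as a short lemma or simply spell out the induction on $l$.

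The one genuine subtlety I expect to have to handle carefully is the bookkeeping of \emph{which} reduced word and \emph{which order} of Demazure operators to use, together with the direction conventions: $\delta^{\vee}_i$ acts on the left (it is a \emph{left} Demazure operator, corresponding to left multiplication on $Fl_{n+1}$), whereas in $\pi^{(y)}_{w\lng}$ the indices are read in the standard left-to-right order of a reduced word for $w\lng \in S_{n+1}$. I would verify that, with the conventions of Section~\ref{sec0401} and the identification in \eqref{eq:CD}, applying $\pi^{(y)}_{i_1}$ first on the polynomial side corresponds to applying $\delta^{\vee}_{i_1}$ first on the $QK$ side, and that the composite $\delta^{\vee}_{i_1} \circ \cdots \circ \delta^{\vee}_{i_l}$ sends $[\CO^{\lng}]$ to $[\CO^{s_{i_1}\cdots s_{i_l}\lng}]$; this is where a sign/order error would creep in, so I would double-check it against the known classical (non-quantum) case and against \cite[Section~8]{MNS}. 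Everything else — the stability of $\CI^{Q}$ under $\pi^{(y)}_i$ and the $\BZ\bra{Q}[x_1,\dots,x_{n+1}]$-linearity of $\delta^{\vee}_i$ — has already been arranged in Section~\ref{sec0403}, so once the combinatorial identity $\delta_{i_1}^{\vee} \cdots \delta_{i_l}^{\vee}([\CO^{\lng}]) = [\CO^{w}]$ is in hand, the theorem follows immediately from Theorem~\ref{thm:lng} and the commutative diagram \eqref{eq:CD}.
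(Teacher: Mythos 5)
Your proposal is correct and follows essentially the same route as the paper's proof: reduce to $w=\lng$ via Theorem~\ref{thm:lng}, transport the operators $\pi^{(y)}_{i_1},\dots,\pi^{(y)}_{i_l}$ attached to a reduced word for $w\lng$ across the commutative diagram \eqref{eq:CD} to the operators $\delta_{i_1}^{\vee},\dots,\delta_{i_l}^{\vee}$, and telescope with \eqref{eq:delta} along the strictly decreasing Bruhat chain from $\lng$ down to $w=s_{i_1}\cdots s_{i_l}\lng$. The order-of-application check you flag is resolved exactly as you anticipate (the paper records the chain $w<s_{i_2}\cdots s_{i_l}\lng<\cdots<s_{i_l}\lng<\lng$, so every application of a $\delta^{\vee}$ takes the decreasing branch).
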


\begin{proof}
The assertion of the theorem is already proved for $w = \lng \in W=S_{n+1}$ by Theorem~\ref{thm:lng}: 
\begin{equation} \label{eq:main1}
\Psi^{Q}(\FG_{\lng}^{Q}(x,y) \ \mathrm{mod} \ \CI^{Q}) = [\CO^{\lng}] \in QK_{H}(Fl_{n+1}). 
\end{equation}
For an arbitrary $w \in W=S_{n+1}$, 
let $w\lng = s_{i_{1}} \cdots s_{i_{l}}$ be a reduced expression; 
notice that $l = \ell(\lng)-\ell(w)$. 
Then, by the definition of quantum double Grothendieck polynomials, we have 
\begin{equation} \label{eq:main2}
\FG_{w}^{Q}(x,y) = \pi_{i_{1}}^{(y)} \cdots \pi_{i_{l}}^{(y)} \FG_{\lng}^{Q}(x,y).
\end{equation}
Here note that $w = s_{i_1} \cdots s_{i_l} \lng$, and that 
$w < s_{i_2} \cdots s_{i_l} \lng < \cdots < s_{i_l}\lng < \lng$ in the Bruhat order $<$ on $W = S_{n+1}$,
since $\ell(w) = \ell(\lng) - l$. Therefore, by the property \eqref{eq:delta} of 
quantum (dual) left Demazure operators $\delta_{i}^{\vee}$, we deduce that 
\begin{equation} \label{eq:main3}
(\delta_{i_1}^{\vee} \cdots \delta_{i_l}^{\vee})[\CO^{\lng}] = [\CO^{w}] \in QK_{H}(Fl_{n+1}).
\end{equation}
From the equalities \eqref{eq:main1}, \eqref{eq:main2}, and \eqref{eq:main3}, 
using the commutative diagram \eqref{eq:CD}, we conclude that 
$\Psi^{Q}(\FG_{w}^{Q}(x,y) \ \mathrm{mod} \ \CI^{Q}) = [\CO^{w}] \in QK_{H}(Fl_{n+1})$. 
This proves the theorem. 
\end{proof}
%
%==================%
% START SECTION 05 %
%==================%
%
\section{Proof of Proposition~\ref{prop:wk}.}
\label{sec:prf-wk}
%
%=======================%
% START SUBSECTION 0501 %
%=======================%
%
\subsection{General Chevalley formula.}

We recall the general Chevalley formula from \cite{LNS} (cf. \cite[Theorem~5.16]{KoLN}), 
which is used in the proof of Proposition~\ref{prop:wk}. 
We use the notation and setting of Section~\ref{subsec:expr}. 
Recall that $\Fg$ is an (arbitrary) finite-dimensional simple Lie algebra over $\BC$. 
Let $\Fh^{\ast}_{\BR} :=\BR \otimes_{\BZ} P$ be a real form of $\Fh^{\ast}$, 
and set 
\begin{equation*}
	H_{\beta,l}:=\bigl\{ \mu \in \Fh^{\ast}_{\BR} \mid \pair{\mu}{\beta^\vee} =l\bigr\} \quad 
	\text{for $\beta \in \Delta$ and $l \in \BZ$}. 
\end{equation*}
We denote by $s_{\beta,l}$ the affine reflection in the affine hyperplane $H_{\beta,l}$.
The affine hyperplanes $H_{\beta,l}$, $\beta \in \Delta$, $l \in \BZ$, divide 
the real vector space $\Fh^{\ast}_{\BR}$ into open regions, called alcoves; 
the fundamental alcove is defined as
\begin{equation*}
	A_{\circ} := \bigl\{ \mu \in \Fh_{\BR}^{\ast} \mid 
	0 < \pair{\mu}{\alpha^\vee} < 1 \quad \text{for all $\alpha \in \Delta^{+}$} \bigr\}.
\end{equation*}
We say that two alcoves are adjacent if they are distinct and have a common wall. Given a pair
of adjacent alcoves $A$ and $B$, we write $A \edge{\beta} B$  for $\beta \in \Delta$ if the
common wall is orthogonal to $\beta$ and $\beta$ points in the direction from $A$ to $B$.

\begin{dfn}[\cite{LP}]
	An alcove path is a sequence of alcoves $(A_0, A_1, \ldots, A_m)$ such that
	$A_{j-1}$ and $A_j$ are adjacent for $j=1,\ldots,m$. 
	We say that $(A_0, A_1, \ldots, A_m)$ is reduced 
	if it has minimal length among all alcove paths from $A_0$ to $A_m$.
\end{dfn}

Let $\lambda \in P$ be a weight, and $A_{\lambda}=A_{\circ}+\lambda$ 
the translation of the fundamental alcove $A_{\circ}$ by the weight $\lambda$.
	
\begin{dfn}[\cite{LP}] \label{dfn:lch}
    Let $\lambda \in P$. 
	The sequence $\Gamma(\lambda)=(\beta_1, \beta_2, \dots, \beta_m)$ of roots is called 
	a reduced $\lambda$-chain (of roots) if 
	\begin{equation}
		A_{\circ}=A_{0} \edge{-\beta_1}  A_1
		\edge{-\beta_2} \cdots 
		\edge{-\beta_m}  A_m=A_{-\lambda}
	\end{equation}
is a reduced alcove path.
\end{dfn}

A reduced alcove path $(A_0=A_{\circ},A_1,\ldots,A_m=A_{-\lambda})$ can be identified 
with the corresponding total order on the hyperplanes, to be called $\lambda$-hyperplanes, 
which separate $A_\circ$ from $A_{-\lambda}$. This total order is given by the sequence 
$H_{\beta_i,-l_i}$ for $i=1,\ldots,m$, where $H_{\beta_i,-l_i}$ contains the common wall of 
$A_{i-1}$ and $A_i$. Note that $\pair{\lambda}{\beta_i^\vee} \ge 0$, and 
that the integers $l_i$, called heights, have the following ranges:
\begin{equation*}
\begin{split}
& 0 \le l_i \le \pair{\lambda}{\beta_i^\vee}-1 \quad \text{if} \quad \beta_i \in \Delta^+, \\
& 1 \le l_i \le \pair{\lambda}{\beta_i^\vee} \quad \text{if} \quad \beta_i \in \Delta^- = - \Delta^+. 
\end{split}
\end{equation*}
Note also that a reduced $\lambda$-chain $(\beta_1, \ldots, \beta_m)$ determines 
the corresponding reduced alcove path, so we can identify them as well. 

Let $\lambda \in P$, and fix an arbitrary $\lambda$-chain 
$\Gamma(\lambda)=(\beta_1,\,\ldots,\,\beta_m)$. 
Also, let $w \in W$. 

\begin{dfn}[\cite{LL}] \label{dfn:admissible}
	A subset 
	$A=\left\{ j_1 < j_2 < \cdots < j_s \right\}$ of $[m]=\{1,\ldots,m\}$ (possibly empty)
 	is a $w$-admissible subset if
	we have the following directed path in the quantum Bruhat graph $\QBG(W)$:
	\begin{equation} \label{eqn:admissible}
	\begin{split}
	& \Pi(w,A): w \edge{|\beta_{j_1}|} w s_{\beta_{j_1}}
	\edge{|\beta_{j_2}|}  ws_{\beta_{j_1}}s_{\beta_{j_2}}
	\edge{|\beta_{j_3}|}  \cdots \\
	& \hspace{30mm} \cdots 
	\edge{|\beta_{j_s}|}  ws_{\beta_{j_1}}s_{\beta_{j_2}} \cdots s_{\beta_{j_s}}=:\ed(A), 
	\end{split}
	\end{equation}
	where for $\beta \in \Delta$, we set 
	\begin{equation*}
	\sgn(\beta):=
	\begin{cases}
	1 & \text{if $\beta \in \Delta^{+}$}, \\
	-1 & \text{if $\beta \in \Delta^{-}$},
	\end{cases}
	\qquad%
	|\beta|:=\sgn(\beta)\beta \in \Delta^{+}.
	\end{equation*}
 	Let $\CA(w,\Gamma(\lambda))$ denote the collection of all $w$-admissible subsets of $[m] = \{1, \ldots, m\}$.
\end{dfn}

Let $A=\{ j_1 < \cdots < j_s\} \in \CA(w,\Gamma(\lambda))$. 
The weight of $A$ is defined by 
	\begin{equation} \label{eq:wta}
	\wt(A):=-w s_{\beta_{j_1},-l_{j_1}} \cdots s_{\beta_{j_s},-l_{j_s}}(-\lambda).
	\end{equation}
Also, we set 
\begin{equation} \label{eq:nA}
n(A):=\# \{ \beta_{j_1},\ldots,\beta_{j_s} \} \cap \Delta^{-}, 
\end{equation}
\begin{equation} \label{eq:A-}
A^{-}:=\left\{j_i \in A \ \biggm| \ 
\begin{array}{l}
\text{%
$ws_{\beta_{j_1}} \cdots s_{\beta_{j_{i-1}}} \edge{|\beta_i|}
 ws_{\beta_{j_1}} \cdots s_{\beta_{j_{i-1}}}s_{\beta_{j_{i}}}$} \\[2mm]
\text{is a quantum edge}
\end{array} \right\}, 
\end{equation}
\begin{equation} \label{def:height}
\dnn(A):=\sum_{j\in A^-}|\beta_j|^\vee\in Q^{\vee,+}.
% \hgt(w,A):=\sum_{j\in A^-}\sgn(\beta_j)\ti{l}_j. 
\end{equation}

Write $\lambda \in P$ as $\lambda=\sum_{i\in I}\lambda_i\vpi_i \in P$, with $\lambda_{i} \in \BZ$ for $i \in I$. 
Let $\ol{\Par(\lambda)}$ denote the set of $I$-tuples of partitions 
$\bchi=(\chi^{(i)})_{i\in I}$ such that $\chi^{(i)}$ is a partition of 
length at most $\max(\lambda_i,0)$.
For $\bchi = (\chi^{(i)})_{i \in I} \in \ol{\Par(\lambda)}$, 
we set $\iota(\bchi) := \sum_{i \in I} \chi^{(i)}_1 \alpha_i^{\vee} \in Q^{\vee,+}$, 
with $\chi^{(i)}_1$ the first part of the partition $\chi^{(i)}$ for each $i \in I$.
%
%%%%%%%%%%%%%%%%%%%
%%% thm:genchev %%%
%%%%%%%%%%%%%%%%%%%
%

By specializing at $q = 1$ in \cite[Theorem~33]{LNS} (cf. \cite[Theorem~5.16]{KoLN}), we obtain the following general Chevalley formula for $K_{H}(\QG)$. 

\begin{thm} \label{thm:genchev} 
Let $\lambda=\sum_{i\in I}\lambda_i\vpi_i \in P$ be an arbitrary weight, 
$\Gamma(\lambda)$ an arbitrary reduced $\lambda$-chain, 
and $x=wt_{\xi}\in W_{\af}^{\ge 0} \simeq W \times Q^{\vee,+}$.
Then, the following equality holds in $K_{H}(\QG)$\,{\rm :}
\begin{align}
& \LQG{-\lng \lambda} \cdot \SQG{x} \nonumber \\[3mm]
& \quad = \sum_{A \in \CA(w,\Gamma(\lambda))}
  \sum_{ \bchi \in \ol{\Par(\lambda)} }
  (-1)^{n(A)} \be^{\wt(A)}
  \SQG{ \ed(A)t_{\xi+\dnn(A)+\iota(\bchi)} }. \label{eq:gchev}
\end{align}
\end{thm}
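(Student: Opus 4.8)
The plan is to deduce the formula by specializing its loop-rotation-equivariant refinement, namely \cite[Theorem~33]{LNS} (cf.\ \cite[Theorem~5.16]{KoLN}). That result computes the product $\LQG{-\lng\lambda} \cdot \SQG{x}$ inside the $(H\times\BC^{\ast})$-equivariant $K$-group $K_{H \times \BC^{\ast}}(\QG)$ as a convergent (in the sense of \cite[Proposition~5.11]{KNS}) infinite sum over the \emph{same} index set $\CA(w,\Gamma(\lambda)) \times \ol{\Par(\lambda)}$; each summand of that $q$-version carries, besides the factor $(-1)^{n(A)}\be^{\wt(A)}$ and the twisted class $\SQG{\ed(A)t_{\xi+\dnn(A)+\iota(\bchi)}}$, an extra monomial $q^{d(A,\bchi)}$ in the loop-rotation variable $q$, where $d(A,\bchi)$ is an explicit integer recording the relevant degree shift; convergence in $\BZ\pra{q^{-1}}[P]$ is exactly what these $q$-powers guarantee. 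Recall from Section~\ref{subsec:expr} that $K_{H}(\QG)$ is by definition obtained from $K_{H \times \BC^{\ast}}(\QG)$ by the specialization $q=1$, under which each twisted semi-infinite Schubert class, as well as tensor product by any line bundle class $\LQG{\nu}$, is carried to its $K_{H}(\QG)$-counterpart.

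First I would check that the hypotheses and the combinatorial data match: $\lambda=\sum_{i\in I}\lambda_i\vpi_i \in P$ is an arbitrary weight, $\Gamma(\lambda)$ an arbitrary reduced $\lambda$-chain, and $x=wt_{\xi}\in W_{\af}^{\ge 0}$, exactly as in \cite[Theorem~33]{LNS}, with the notions of $w$-admissible subset, of $n(A)$, $\wt(A)$, $A^{-}$, $\dnn(A)$, and of $\iota(\bchi)$ for $\bchi\in\ol{\Par(\lambda)}$ defined identically. Next I would apply the specialization $q\mapsto 1$ termwise. On the left-hand side, $\LQG{-\lng\lambda}\cdot\SQG{x}$ is sent to $\LQG{-\lng\lambda}\cdot\SQG{x}$ in $K_{H}(\QG)$. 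On the right-hand side, each $q^{d(A,\bchi)}$ becomes $1$, each $\be^{\wt(A)}\in\BZ[P]$ is unaffected, and each $\SQG{\ed(A)t_{\xi+\dnn(A)+\iota(\bchi)}}$ is sent to the corresponding class in $K_{H}(\QG)$; this produces precisely \eqref{eq:gchev}.

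The one genuinely delicate point is the well-definedness of the specialized right-hand side: the sum over $\bchi\in\ol{\Par(\lambda)}$ is infinite, and after setting $q=1$ the $q$-powers that enforced convergence in the $\BC^{\ast}$-equivariant group are lost. Here I would argue directly from the definition of $K_{H}(\QG)$ recalled in Section~\ref{subsec:expr}, as the $\BZ[P]$-module of all (possibly infinite) linear combinations of the $\SQG{y}$, $y\in W_{\af}^{\ge 0}$, with coefficients in $\BZ[P]$: for each fixed $A$ the end points $\ed(A)t_{\xi+\dnn(A)+\iota(\bchi)}$ are pairwise distinct as $\bchi$ ranges over $\ol{\Par(\lambda)}$ (they already differ in the translation part $\iota(\bchi)$), so every basis class occurs with coefficient in $\BZ[P]$ and the right-hand side of \eqref{eq:gchev} is a legitimate element of $K_{H}(\QG)$; equivalently, the specialization map $K_{H \times \BC^{\ast}}(\QG)\to K_{H}(\QG)$ is defined on convergent sums and carries the $q$-identity to \eqref{eq:gchev}. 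The remaining work is purely a matter of conventions: reconciling the sign and twist normalizations of \cite{LNS,KoLN} with those of the present paper — the excerpt explicitly warns that \cite{Kat1} differs from \cite{KNS} by the twist coming from $-\lng$ — so that the line bundle on the left is correctly identified as $\CO_{\QG}(-\lng\lambda)$ and the weights $\wt(A)$ appear with the stated sign. This convention check, and not any substantive argument, is the main obstacle.
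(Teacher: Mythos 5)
Your proposal is correct and follows exactly the paper's route: the paper derives Theorem~\ref{thm:genchev} precisely by citing \cite[Theorem~33]{LNS} (cf.\ \cite[Theorem~5.16]{KoLN}) and specializing at $q=1$, relying on the definition of $K_{H}(\QG)$ as the $\BZ[P]$-module of all (possibly infinite) linear combinations of semi-infinite Schubert classes to absorb the loss of $q$-adic convergence. Your additional remarks on well-definedness of the specialized infinite sum and on matching conventions are exactly the points the paper leaves implicit.
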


%%%
%%%
%%%

%=======================%
% START SUBSECTION 0502 %
%=======================%
%
\subsection{Proof of Proposition~\ref{prop:wk}.}
\label{subsec:prf-wk}

%%%
%%%
%%%

This subsection is devoted to the proof of Proposition~\ref{prop:wk}. 
In order to prove equation \eqref{eq:wk}, we will compute
\begin{equation}
\left( 
 \left(\prod_{ j \in J^{-} }
 (1-\st_{j})\right) \LQG{ \lng \eps_{J} }
\right) \otimes \SQG{w_{k+1}} \in K_{H}(\QG)
\end{equation}
for $J = \bigl\{ j_{1},j_{2},\dots,j_{p} \bigr\} \subset [k] =\bigl\{1,2,\ldots,k\bigr\}$, 
with $1 \le j_{1} < j_{2} < \cdots < j_{p} \le k$; 
recall Remark~\ref{rem:st}. 
For this purpose, we make use of Theorem~\ref{thm:genchev}, 
with $\lambda=-\eps_{J}$ and $x = w_{k+1}$. 
%%%%%%
First, notice that for $1 \le j \le n$, 
$\pair{-\eps_{J}}{\alpha_{j}^{\vee}} \in \{-1,0,1\}$, and 
$\pair{-\eps_{J}}{\alpha_{j}^{\vee}} = 1$ if and only if $j \in J^{-}$. 
Hence, in the case that $\lambda=-\eps_{J}$ and $x = w_{k+1}$, 
we can rewrite \eqref{eq:gchev} as 
\begin{align}
& \LQG{\lng \eps_{J}} \cdot \SQG{w_{k+1}} \nonumber \\[3mm]
& \quad = \sum_{A \in \CA(w_{k+1},\Gamma(-\eps_{J}))} \, 
  \sum_{ \bchi \in \ol{\Par(-\eps_{J})} }
  (-1)^{n(A)} \be^{\wt(A)}
  \SQG{ \ed(A)t_{\dnn(A)+\iota(\bchi)} } \nonumber \\[3mm]
& \quad = \sum_{A \in \CA(w_{k+1},\Gamma(-\eps_{J}))} \, 
  \sum_{ \zeta \in \sum_{j \in J^{-}}\BZ_{\ge 0}\alpha_{j}^{\vee} } 
  (-1)^{n(A)} \be^{\wt(A)}
  \SQG{ \ed(A)t_{\dnn(A)+\zeta} } \nonumber \\[3mm]
& \quad = \left(\prod_{j \in J^{-}}
  \frac{1}{1-\st_{j}}\right)
  \sum_{A \in \CA(w_{k+1},\Gamma(-\eps_{J}))}
  (-1)^{n(A)} \be^{\wt(A)}
  \SQG{ \ed(A)t_{\dnn(A)} }. \label{eq:gchev2}
\end{align}

Next, following \cite[Lemma 4.1]{LNOS}, we take a special reduced $(-\eps_{J})$-chain 
(of roots) from $A_{\circ}$ to $A_{\eps_{J}}$ (see Definition~\ref{dfn:lch}) as follows. 
Notice that $\eps_{J} \in W\vpi_{p}$. 
Let $x_{J}$ be the (unique) minimal-length element in 
$\bigl\{ w \in W \mid w \vpi_{p} = \eps_{J} \bigr\}$. 
Then we see that
\begin{equation}
\begin{split}
x_{J} & = s_{i_{a}} \cdots s_{i_{1}} \\
& = 
\underbrace{(s_{j_1-1} \cdots s_{2}s_{1})}_{\text{mapping $\eps_{1}$ to $\eps_{j_{1}}$.}}
\underbrace{(s_{j_2-1} \cdots s_{3}s_{2})}_{\text{mapping $\eps_{2}$ to $\eps_{j_{2}}$.}} 
\cdots \\[2mm]
& \qquad \cdots 
\underbrace{(s_{j_{p-1}-1} \cdots s_{p}s_{p-1})}_{\text{mapping $\eps_{p-1}$ to $\eps_{j_{p-1}}$.}}
\underbrace{(s_{j_{p}-1} \cdots s_{p+1}s_{p})}_{\text{mapping $\eps_{p}$ to $\eps_{j_p}$.}}
\end{split}
\end{equation}
is a reduced expression for $x_{J}$. 
Let $y_{J}$ be the (unique) element such that 
$y_{J}x_{J}$ is the (unique) minimal-length element in 
$\bigl\{ w \in W \mid w \vpi_{p} = \lng \vpi_{p} \bigr\}$. Then, 
\begin{equation}
\begin{split}
y_{J} & = s_{k_{1}} \cdots s_{k_{b}} \\
& = 
\underbrace{(s_{n-p+1} \cdots s_{j_{1}+1}s_{j_{1}})}_{\text{mapping $\eps_{j_1}$ to $\eps_{n-p+2}$.}}
\underbrace{(s_{n-p+2} \cdots s_{j_{2}+1}s_{j_{2}})}_{\text{mapping $\eps_{j_2}$ to $\eps_{n-p+3}$.}} \cdots \\[2mm]
& \qquad \cdots 
\underbrace{(s_{n-1} \cdots s_{j_{p-1}+1}s_{j_{p-1}}) }_{\text{mapping $\eps_{j_{p-1}}$ to $\eps_{n}$.}}
\underbrace{(s_{n} \cdots s_{j_{p}+1}s_{j_{p}})}_{\text{mapping $\eps_{j_p}$ to $\eps_{n+1}$.}}
\end{split}
\end{equation}
is a reduced expression for $y_{J}$. We set 
\begin{align}
& \beta_{c}:=s_{i_{a}} \cdots s_{i_{c+1}}\alpha_{i_{c}} \in \Delta^{+} \quad 
  \text{for $1 \le c \le a$}, \\
& \gamma_{d}:=s_{k_{b}} \cdots s_{k_{d+1}}\alpha_{k_{d}} \in \Delta^{+} \quad 
  \text{for $1 \le d \le b$};
\end{align}
it follows from \cite[Lemma 4.1]{LNOS} that
\begin{equation} \label{eq:GammaJ}
\Gamma_{J}=(\zeta_{1},\dots,\zeta_{a+b}):=
(-\beta_{a},\dots,-\beta_{1},\gamma_{1},\dots,\gamma_{b})
\end{equation}
is a reduced $(-\eps_{J})$-chain from $A_{\circ}$ to $A_{\eps_{J}}$. 
Moreover, for $1 \le c \le a$, the affine hyperplane between the $(c-1)$-th alcove and 
the $c$-th alcove in $\Gamma_{J}$ is $H_{-\beta_{a-c+1},0}$, and 
for $1 \le d \le b$, the affine hyperplane between the $(a+d-1)$-th alcove and 
the $(a+d)$-th alcove in $\Gamma_{J}$ is $H_{\gamma_{d},1}$; note that 
$\eps_{J} \in H_{\gamma_{d},1}$, and hence $s_{\gamma_{d},1}(\eps_{J}) = \eps_{J}$ for all $1 \le d \le b$ 
(see also Remark~\ref{rem:gamma} below). 

%%%%%%%%%%%%%%%%%%%%%%%%%%%%%%%%%%%%%%%%%%%%%%%%%%%%%%%%%
\paragraph{Computation of $(\beta_{a},\dots,\beta_{1})$.}
%%%%%%%%%%%%%%%%%%%%%%%%%%%%%%%%%%%%%%%%%%%%%%%%%%%%%%%%%
%
For $1 \le u \le p$ and $u \le t \le j_{u}-1$, we set
\begin{align*}
\beta_{u,t} & := 
(s_{j_1-1} \cdots s_{2}s_{1}) \cdots
(s_{j_{u-1}-1} \cdots s_{u}s_{u-1})
(s_{j_{u}-1} \cdots s_{t+1})\alpha_{t}; 
\end{align*}
notice that 
the sequence $(\beta_{a},\dots,\beta_{1})$ is identical to
\begin{equation}
\begin{split}
& (\beta_{1,j_{1}-1},\dots,\beta_{1,2},\beta_{1,1},\ 
   \beta_{2,j_{2}-1},\dots,\beta_{2,3},\beta_{2,2},\ \dots\dots, \\
& \ \beta_{p-1,j_{p-1}-1},\dots,\beta_{p-1,p},\beta_{p-1,p-1},\ 
  \beta_{p,j_{p}-1},\dots,\beta_{p,p+1},\beta_{p,p}). 
\end{split}
\end{equation}
Let us compute $\beta_{u,t}$ 
for $1 \le u \le p$ and $u \le t \le j_{u}-1$: 
\begin{align*}
\beta_{u,t} & := 
(s_{j_1-1} \cdots s_{2}s_{1}) \cdots
(s_{j_{u-1}-1} \cdots s_{u}s_{u-1})
(s_{j_{u}-1} \cdots s_{t+1})(\eps_{t}-\eps_{t+1}) \\ 
& = (s_{j_1-1} \cdots s_{2}s_{1}) \cdots
(s_{j_{u-1}-1} \cdots s_{u}s_{u-1}) (\eps_{t}-\eps_{j_{u}}) \\
& =
\begin{cases}
\eps_{t}-\eps_{j_{u}} & \text{if $t > j_{u-1}$}, \\
(s_{j_1-1} \cdots s_{2}s_{1}) \cdots
(s_{j_{u-2}-1} \cdots s_{u-1}s_{u-2})(\eps_{t-1}-\eps_{j_{u}})
& \text{otherwise}. 
\end{cases}
\end{align*}
In the case $t \le j_{u-1}$, we have 
\begin{align*}
& (s_{j_1-1} \cdots s_{2}s_{1}) \cdots
(s_{j_{u-2}-1} \cdots s_{u-1}s_{u-2})(\eps_{t-1}-\eps_{j_{u}}) \\
& = 
\begin{cases}
\eps_{t-1}-\eps_{j_{u}} & \text{if $t-1 > j_{u-2}$}, \\
(s_{j_1-1} \cdots s_{2}s_{1}) \cdots
(s_{j_{u-3}-1} \cdots s_{u-2}s_{u-3})(\eps_{t-2}-\eps_{j_{u}})
& \text{otherwise}. 
\end{cases}
\end{align*}
Repeating this computation, we deduce that
\begin{equation}
\beta_{u,t}=\eps_{t-\tau(u,t)}-\eps_{j_{u}},
\end{equation}
where $\tau(u,t):= |\bigl\{1 \le r \le u-1 \mid t-r+1 \le j_{u-r} \bigr\}|$. 
%
%%%%%%%%%%%%%%%%
%%% rem:beta %%%
%%%%%%%%%%%%%%%%
%
\begin{rem} \label{rem:beta}
Let $1 \le u \le p$, and write 
\begin{equation} \label{eq:t1}
\bigl\{1,2,\dots,j_{u}-1\bigr\} 
\setminus \bigl\{j_{1},\dots,j_{u-1}\bigr\} = 
\bigl\{t_{u,1}< t_{u,2} < \dots < t_{u,m_{u}}\bigr\},
\end{equation}
where $m_{u}=(j_{u}-1)-(u-1) = j_{u}-u$;
we set $t_{u,m_{u}+1}:=j_{u}$ by convention. Then, we have 
\begin{equation*}
\begin{split}
& (\beta_{u,j_{u}-1},\dots,\beta_{u,u+1},\beta_{u,u}) \\
& = \bigl(
\eps_{t_{u,m_u}}-\eps_{j_{u}}, \ \eps_{t_{u,m_u-1}}-\eps_{j_{u}}, \ \dots, \ 
\eps_{t_{u,2}}-\eps_{j_{u}}, \ \eps_{t_{u,1}}-\eps_{j_{u}} \bigr).
\end{split}
\end{equation*}
Hence the sequence $(\beta_{a},\dots,\beta_{1})$ is of the form: 
\begin{align*}
& (j_{1}-1,j_{1}),(j_{1}-2,j_{1}), \dots, (2,j_{1}),(1,j_{1}), \\
& (j_{2}-1,j_{2}),(j_{2}-2,j_{2}), \dots, \text{\sout{$(j_{1},j_{2})$}}, \dots, (2,j_{2}),(1,j_{2}), \\
& (j_{3}-1,j_{3}),(j_{3}-2,j_{3}), \dots, \text{\sout{$(j_{2},j_{3})$}}, \dots, 
  \text{\sout{$(j_{1},j_{3})$}}, \dots, (2,j_{3}), (1,j_{3}), \\
& \dots\dots \\
& (j_{p}-1,j_{p}),(j_{p}-2,j_{p}), \dots, \text{\sout{$(j_{p-1},j_{p})$}}, \dots,
  \text{\sout{$(j_{2},j_{p})$}}, \dots, 
  \text{\sout{$(j_{1},j_{p})$}}, \dots, (2,j_{p}),(1,j_{p}), 
\end{align*}
where $(j_{r},j_{s})$ does not appear 
in the sequence above for any $1 \le r < s \le p$. 
\end{rem}

%%%%%%%%%%%%%%%%%%%%%%%%%%%%%%%%%%%%%%%%%%%%%%%%%%%%%%%%%%%
\paragraph{Computation of $(\gamma_{1},\dots,\gamma_{b})$.}
%%%%%%%%%%%%%%%%%%%%%%%%%%%%%%%%%%%%%%%%%%%%%%%%%%%%%%%%%%%
%
For $1 \le u \le p$ and $j_{u} \le t \le n-p+u$, we set
\begin{align*}
\gamma_{u,t} & := 
(s_{j_{p}} s_{j_{p}+1} \cdots s_{n}) \cdots 
(s_{j_{u+1}} s_{j_{u+1}+1} \cdots s_{n-p+u+1})
(s_{j_{u}} s_{j_{u}+1} \cdots s_{t-1})\alpha_{t}; 
\end{align*}
notice that 
the sequence $(\gamma_{1},\dots,\gamma_{b})$ is identical to
\begin{equation}
\begin{split}
& (\gamma_{1,n-p+1},\dots,\gamma_{1,j_{1}+1},\gamma_{1,j_{1}}, \ 
   \gamma_{2,n-p+2},\dots,\gamma_{2,j_{2}+1},\gamma_{2,j_{2}}, \ \dots\dots, \\
& \ 
   \gamma_{p-1,n-1},\dots,\gamma_{p-1,j_{p-1}+1},\gamma_{p-1,j_{p-1}}, \ 
   \gamma_{p,n},\dots,\gamma_{p,j_{p}+1},\gamma_{p,j_{p}} ). 
\end{split}
\end{equation}
We deduce that for $1 \le u \le p$ and $j_{u} \le t \le n-p+u$, 
\begin{equation}
\gamma_{u,t} := \eps_{j_{u}}-\eps_{t+1+\sigma(u,t)}, 
\end{equation}
where $\sigma(u,t):= |\bigl\{ u+1 \le r \le p \mid t+r-u \ge j_{r} \bigr\}|$. 
%
%%%%%%%%%%%%%%%%%
%%% rem:gamma %%%
%%%%%%%%%%%%%%%%%
%
\begin{rem} \label{rem:gamma}
Let $1 \le u \le p$, and write 
\begin{equation*}
\bigl\{j_{u}+1,j_{u}+2,\dots,n+1 \bigr\} 
\setminus \bigl\{j_{u+1},\dots,j_{p}\bigr\} = 
\bigl\{t_{1}< t_{2} < \dots < t_{m}\bigr\},
\end{equation*}
where $m=(n+1-j_{u})-(p-u) = n + 1 - p - j_{u}+u$. Then, we have 
\begin{equation*}
\begin{split}
& (\gamma_{u,n-p+u},\dots,\gamma_{u,j_{u}+1},\gamma_{u,j_{u}}) \\
& = \bigl( \eps_{j_{u}}-\eps_{t_{m}}, \ \eps_{j_{u}}-\eps_{t_{m-1}}, \ \dots, \ 
\eps_{j_{u}}-\eps_{t_{2}}, \ \eps_{j_{u}}-\eps_{t_{1}} \bigr).
\end{split}
\end{equation*}
Hence the sequence $(\gamma_{1},\cdots,\gamma_{b})$ is of the form: 
\begin{align*}
& (j_{1},n+1),(j_{1},n), \dots, \text{\sout{$(j_{1},j_{p})$}}, \dots,
  \text{\sout{$(j_{1},j_{2})$}}, \dots, (j_{1},j_{1}+2),(j_{1},j_{1}+1), \\ 
& (j_{2},n+1),(j_{2},n), \dots, \text{\sout{$(j_{2},j_{p})$}}, \dots,
  \text{\sout{$(j_{2},j_{3})$}}, \dots, (j_{2},j_{2}+2),(j_{2},j_{2}+1), \\ 
& \dots\dots \\
& (j_{p-1},n+1),(j_{p-1},n), \dots, \text{\sout{$(j_{p-1},j_{p})$}}, \dots, 
  (j_{p-1},j_{p-1}+2),(j_{p-1},j_{p-1}+1), \\ 
& (j_{p},n+1),(j_{p},n), \dots, (j_{p},j_{p}+2),(j_{p},j_{p}+1), 
\end{align*}
where $(j_{r},j_{s})$ does not appear 
in the sequence above for any $1 \le r < s \le p$. 
\end{rem}

Now, let $\bD_{J}$ be the set of directed paths in $\QBG(W)$ of the form: 
\begin{equation} \label{eq:bp}
\begin{split}
& \bp:w_{k+1} = \underbrace{%
  x_{s} \edge{\beta_{a_s}} \cdots \edge{\beta_{a_1}} x_{0}}_{%
  \text{$=:\bp_{\beta}$, the $\beta$-part of $\bp$}} = 
  \underbrace{%
  y_{0} \edge{\gamma_{b_1}} \cdots \edge{\gamma_{b_t}} y_{t}}_{%
  \text{$=:\bp_{\gamma}$, the $\gamma$-part of $\bp$}} = : \ed(\bp),  \\
& \text{\rm with $s \ge 0$ and $a \ge a_{s} > \cdots > a_{1} \ge 1$}, \\
& \text{\rm with $t \ge 0$ and $1 \le b_{1} < b_{2} < \cdots < b_{t} \le b$}; 
\end{split}
\end{equation}
for simplicity of notation, we will henceforth write
\begin{align*}
\bp & = [\beta_{a_s},\,\dots,\,\beta_{a_1} \mid \gamma_{b_1},\,\dots,\gamma_{b_t}], \\
\bp_{\beta} & = [\beta_{a_s},\,\dots,\,\beta_{a_1}], \quad 
\bp_{\gamma} = [\gamma_{b_1},\,\dots,\gamma_{b_t}].
\end{align*}
It is easy to see that 
there exists a bijection between 
the sets $\bD_{J}$ and $\CA(w_{k+1},\Gamma_{J})$. 
Indeed, for a subset $A=\bigl\{p_{1},\dots,p_{u}\}$ of 
$[a+b]=\{1,2,\dots,a+b\}$, $A$ is an element of 
$\CA(w_{k+1},\Gamma_{J})$ if and only if 
\begin{equation} \label{eq:bpa}
\bp(A):w_{k+1}=w_{0} \edge{\zeta_{p_1}} w_{1} \edge{\zeta_{p_2}} \cdots 
\edge{\zeta_{p_u}} w_{u}
\end{equation}
is an element of $\bD_{J}$. 
Notice that $\ed(A)=\ed(\bp(A))$ and $\dnn(A)=\wt(\bp)$. 
For $\bp \in \bD_{J}$ of the form \eqref{eq:bp}, we set
$\ed(\bp_{\beta}):=x_{0}$ and $\ell(\bp_{\gamma}):=t$; 
we see that for $A \in \CA(w_{k+1},\Gamma_{J})$, 
$\wt(A)=-\ed(\bp(A)_{\beta})\eps_{J}$ and $n(A)=\ell(\bp(A)_{\gamma})$. 
Therefore, we deduce from \eqref{eq:gchev2} that in $K_{H}(\QG)$, 
\begin{equation*}
\begin{split}
& \left( 
 \left(\prod_{ j \in J^{-} }
 (1-\st_{j})\right) \LQG{ \lng \eps_{J} }
\right) \otimes \SQG{w_{k+1}} \\[3mm]
& \hspace{5mm} 
= 
 \left(\prod_{ j \in J^{-} }
 (1-\st_{j})\right) \bigl(
 \LQG{ \lng \eps_{J} } \otimes \SQG{w_{k+1}} \bigr) 
 \quad \text{(see Remark~\ref{rem:st})} \\[3mm]
& \hspace{5mm} 
= \left(\prod_{ j \in J^{-} }
  (1-\st_{j})\right)
\left( \left(\prod_{ j \in J^{-} }
  \frac{1}{1-\st_{j}}\right) 
\sum_{\bp \in \bD_{J}} \be^{-\ed(\bp_{\beta})\eps_{J}} (-1)^{\ell(\bp_{\gamma})} 
\SQG{ \ed(\bp)t_{\wt(\bp)} } \right) \\[3mm]
& \hspace{5mm} 
= 
\sum_{\bp \in \bD_{J}} \be^{-\ed(\bp_{\beta})\eps_{J}} (-1)^{\ell(\bp_{\gamma})} 
\SQG{ \ed(\bp)t_{\wt(\bp)} },
\end{split}
\end{equation*}
and hence that 
\begin{align}
& \overbrace{
\left(
 \sum_{p=0}^{k}(-1)^{p} \be^{p \eps_{n+1-k}} \FF^{k}_{p}
\right) \otimes \SQG{ w_{k+1} } }^{ \text{LHS of \eqref{eq:wk}} } \nonumber \\[3mm]
& = 
\sum_{p=0}^{k}(-1)^{p} \be^{p \eps_{n+1-k}}
 \sum_{
   \begin{subarray}{c}
   J \subset [k] \\[1mm]
   |J|=p
   \end{subarray}}
 \left(
 \left(\prod_{ j \in J^{-} }
 (1-\st_{j})\right)\LQG{ \lng \eps_{J} } \right) \otimes \SQG{ w_{k+1} } \nonumber \\[3mm]
& = 
\sum_{J \subset [k]} (-1)^{|J|} \be^{|J| \eps_{n+1-k}}
 \left(
 \left(\prod_{ j \in J^{-} }
 (1-\st_{j})\right)\LQG{ \lng \eps_{J} } \right) \otimes \SQG{ w_{k+1} } \nonumber \\[3mm]
& = \sum_{J \subset [k]} 
\sum_{\bp \in \bD_{J}}
\be^{|J| \eps_{n+1-k}-\ed(\bp_{\beta})\eps_{J}}
(-1)^{|J|+\ell(\bp_{\gamma})} 
\SQG{ \ed(\bp)t_{\wt(\bp)} }. \label{eq:wk2}
\end{align}

We now define
\begin{equation}
\bD:=\bigsqcup_{J \subset [k]} \bD_{J}. 
\end{equation}
For each $J \subset [k]$, we set
\begin{align}
& \bD_{J}^{\RA}:=\bigl\{ \bp \in \bD_{J} \mid \ed(\bp_{\beta})^{-1}(1) \in J,\,
\ed(\bp_{\beta})^{-1}(1) \ne 1 \bigr\}, \\
& \bD_{J}^{\RB}:=\bigl\{ \bp \in \bD_{J} \mid \ed(\bp_{\beta})^{-1}(1) \in J,\,
\ed(\bp_{\beta})^{-1}(1) = 1 \bigr\}, \\
& \bD_{J}^{\RC}:=\bigl\{ \bp \in \bD_{J} \mid \ed(\bp_{\beta})^{-1}(1) \not\in J \bigr\}; 
\end{align}
it is obvious that
$\bD_{J} = \bD_{J}^{\RA} \sqcup \bD_{J}^{\RB} \sqcup \bD_{J}^{\RC}$. Set
\begin{equation}
\bD^{\RX}:=\bigsqcup_{J \subset [k]} \bD_{J}^{\RX} \quad 
\text{for each $\RX \in \{ \RA,\,\RB,\,\RC \}$}; 
\end{equation}
note that
$\bD_{J} = \bD_{J}^{\RA} \sqcup \bD_{J}^{\RB} \sqcup \bD_{J}^{\RC}$.

Here, recall that 
\begin{equation*}
\begin{split}
& w_{k+1}  := 
(s_{1}s_{2} \cdots s_{n})
(s_{1}s_{2} \cdots s_{n-1}) \cdots 
(s_{1}s_{2} \cdots s_{k+1}) \\
& = (1,2,\dots,n+1)(1,2,\dots,n) \cdots (1,2,\dots,k+2) \\[1.5mm]
& = 
\begin{pmatrix}
1 & 2 & \cdots & k & k+1 & k+2 & \cdots & n & n+1 \\
n-k+1 & n-k+2 & \cdots & n & n+1 & n-k  & \cdots & 2 & 1
\end{pmatrix}.
\end{split}
\end{equation*}
Let $J = \bigl\{ j_{1},j_{2},\dots,j_{p} \bigr\} \subset [k]$, 
with $1 \le j_{1} < j_{2} < \cdots < j_{p} \le k$, and define 
$\Gamma_{J}=(-\beta_{a},\ldots,-\beta_{1},\gamma_{1},\dots,\gamma_{b})$ as in \eqref{eq:GammaJ}. 

%%%%%%%%%%%%%%%%%%%%%%%%%%%%%%%%%%%%%%%%%%%%%%%%%%
\paragraph{Computation of the ``$\beta$-part'' of $\bp \in \bD_{J}$.}
%%%%%%%%%%%%%%%%%%%%%%%%%%%%%%%%%%%%%%%%%%%%%%%%%%
%
\begin{enumerate}
\item[(1a)] We see that
\begin{equation} \label{eq:dp1a}
w_{k+1} = x_{0} \edge{(k_{1},j_{1})} x_{1} \edge{(k_{2},j_{1})} \cdots 
\edge{(k_{r},j_{1})} x_{r}
\end{equation}
is a directed path in $\QBG(W)$, where $r \ge 0$ and 
$j_{1}-1 \ge k_{1} > k_{2} > \cdots > k_{r} \ge 1$, 
if and only if $k_{1}=j_{1}-1$, $k_{2}=j_{1}-2$, $\dots$, $k_{r}=j_{1}-r$; 
in this case, all the edges in \eqref{eq:dp1a} are Bruhat edges. 

\item[(1b)] We see that 
\begin{equation} \label{eq:dp1b}
\begin{split}
w_{k+1} = x_{0} & 
\edge{(j_{1}-1,j_{1})} x_{1} \edge{(j_{1}-2,j_{1})} \cdots 
\edge{(j_{1}-r_{1},j_{1})} x_{r_{1}} \\
& 
\edge{(k_{1},j_{2})} x_{r_{1}+1} \edge{(k_{2},j_{2})} \cdots 
\edge{(k_{r},j_{2})} x_{r_{1}+r}
\end{split}
\end{equation}
is a directed path in $\QBG(W)$, where $r_{1},r \ge 0$, 
and $j_{2}-1 \ge k_{1} > k_{2} > \cdots > k_{r} \ge 1$ 
with $k_{r'} \ne j_{1}$ for any $1 \le r ' \le r$, 
if and only if $\{k_{1},k_{2},\dots.k_{r}\}$ is identical to 
the largest $r$ elements in $\{1,2,\dots,j_{2}-1\} \setminus \{j_{1}\}$ (see \eqref{eq:t1}), 
and $k_{r} \ge j_{1}-r_{1}$; 
in this case, all the edges in \eqref{eq:dp1b} are Bruhat edges. 

\item[(1c)] We see that 
\begin{equation} \label{eq:dp1c}
\begin{split}
w_{k+1} = x_{0} & 
\edge{(j_{1}-1,j_{1})} x_{1} \edge{(j_{1}-2,j_{1})} \cdots 
\edge{(j_{1}-r_{1},j_{1})} x_{r_{1}} \\
& 
\edge{(t_{2,m_{2}},j_{2})} x_{r_{1}+1} \edge{(t_{2,m_{2}-1},j_{2})} \cdots 
\edge{(t_{2,m_{2}-r_{2}+1},j_{2})} x_{r_{1}+r_{2}} \\
& 
\edge{(k_{1},j_{3})} x_{r_{1}+r_{2}+1} \edge{(k_{2},j_{3})} \cdots 
\edge{(k_{r},j_{3})} x_{r_{1}+r_{2}+r}
\end{split}
\end{equation}
is a directed path in $\QBG(W)$, where $r_{1},r_{2},r \ge 0$, 
$t_{2,m_{2}-r_{2}+1} \ge j_{1}-r_{1}$, and 
$j_{3}-1 \ge k_{1} > k_{2} > \cdots > k_{r} \ge 1$ 
with $k_{r'} \ne j_{1},j_{2}$ for any $1 \le r ' \le r$, 
if and only if $\{k_{1},k_{2},\dots.k_{r}\}$ is identical to 
the largest $r$ elements in $\{1,2,\dots,j_{3}-1\} \setminus \{j_{1},j_{2}\}$ (see \eqref{eq:t1}), 
and $k_{r} \ge t_{2,m_{2}-r_{2}+1}$; 
in this case, all the edges in \eqref{eq:dp1c} are Bruhat edges. 
\end{enumerate}

Repeating the argument above, we obtain the following. 
%
%%%%%%%%%%%%%%%%%
%%% lem:bpart %%%
%%%%%%%%%%%%%%%%%
%
\begin{lem} \label{lem:bpart}
{\rm (1)} We have a directed path in $\QBG(W)$ of the form
\begin{equation} \label{eq:dp1p}
\begin{split}
w_{k+1} & 
\Be{(t_{1,m_{1}},j_{1})} \bullet \Be{(t_{1,m_{1}-1},j_{1})} \cdots 
\Be{(t_{1,m_{1}-r_{1}+1},j_{1})} \bullet \\
& 
\Be{(t_{2,m_{2}},j_{2})} \bullet \Be{(t_{2,m_{2}-1},j_{2})} \cdots 
\Be{(t_{2,m_{2}-r_{2}+1},j_{2})} \bullet \\
& \cdots \cdots \\
&
\Be{(t_{p,m_{p}},j_{p})} \bullet \Be{(t_{p,m_{p}-1},j_{p})} \cdots 
\Be{(t_{p,m_{p}-r_{p}+1},j_{p})} \bullet
\end{split}
\end{equation}
for each $r_{1},r_{2},\dots,r_{p} \ge 0$ and 
$t_{p,m_{p}-r_{p}+1} \ge \cdots \ge t_{2,m_{2}-r_{2}+1} \ge t_{1,m_{1}-r_{1}+1}$. 
% where $t_{u,m_{u}+1}:=j_{u}$ for $1 \le u \le p$ by convention. 

{\rm (2)} The $\beta$-part of 
an element $\bp \in \bD_{J}$ is of the form \eqref{eq:dp1p}. 
\end{lem}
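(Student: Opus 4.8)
The plan is to reduce the computation of the $\beta$-part of a path $\bp \in \bD_J$ to the combinatorics of the sequence $(\beta_a,\dots,\beta_1)$ as worked out in Remark~\ref{rem:beta}, together with the explicit one-line notation for $w_{k+1}$ recorded just above the lemma. First I would recall that, by \eqref{eq:GammaJ} and the identification of $\bD_J$ with $\CA(w_{k+1},\Gamma_J)$ via \eqref{eq:bpa}, the $\beta$-part of $\bp$ corresponds to choosing a subset of the positions $\{1,\dots,a\}$ whose associated roots $\beta_{a_s},\dots,\beta_{a_1}$ (taken in decreasing order of index) form a directed path in $\QBG(W)$ starting at $w_{k+1}$. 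Since each $\beta_c = \eps_{t-\tau(u,t)}-\eps_{j_u}$ is of the form $\eps_p - \eps_{j_u}$ with $p < j_u \le k$, and since $w_{k+1}$ sends $\{1,\dots,k+1\}$ to $\{n-k+1,\dots,n+1\}$ in increasing order (and sends $\{k+2,\dots,n+1\}$ to $\{n-k,\dots,2,1\}$ in decreasing order), I would first verify that right-multiplication by such a transposition $s_{\beta_c}$ acting on an element reached so far from $w_{k+1}$ always increases length by exactly one, i.e.\ produces a \emph{Bruhat} edge and never a quantum edge. This is the content of the claim ``all edges are Bruhat edges'' in cases (1a)--(1c), and it follows because the values in positions $\le k$ stay large (in the top block $\{n-k+1,\dots,n+1\}$) throughout the $\beta$-part, so every admissible transposition swaps a larger entry into an earlier position — an inversion-creating move.

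Next I would carry out the induction suggested by the displayed cases (1a), (1b), (1c): assuming the first $u-1$ ``blocks'' of the $\beta$-part have been built, producing a path that processes the pairs involving $j_1,\dots,j_{u-1}$, I would analyze which roots $\beta_{u,t} = \eps_{t-\tau(u,t)}-\eps_{j_u}$ can legitimately extend the path. Using Remark~\ref{rem:beta}, the sub-sequence $(\beta_{u,j_u-1},\dots,\beta_{u,u})$ equals $(\eps_{t_{u,m_u}}-\eps_{j_u},\dots,\eps_{t_{u,1}}-\eps_{j_u})$ where $\{t_{u,1}<\dots<t_{u,m_u}\} = \{1,\dots,j_u-1\}\setminus\{j_1,\dots,j_{u-1}\}$; the key point is that the admissibility constraint in $\QBG(W)$ forces any chosen subset of these positions to be a \emph{prefix} (in the order they appear, i.e.\ from $t_{u,m_u}$ downward), because after applying $s_{(t,j_u)}$ the value $n-k+j_u$ has moved into position $t$, and the next usable transposition of the form $(\cdot,j_u)$ must use a still-earlier position, which by the structure of the sequence is exactly $t_{u,m_u-1}$, and so on. The ``matching condition'' $t_{p,m_p-r_p+1}\ge\dots\ge t_{1,m_1-r_1+1}$ then emerges from requiring that when we pass from the $j_u$-block to the $j_{u+1}$-block, the values already rearranged are not disturbed — concretely, the last transposition used in block $u$ must sit at a position no smaller than the first one used in block $u-1$, which translates into that chain of inequalities.

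The main obstacle I expect is bookkeeping the one-line notation carefully enough to see that (i) no quantum edges ever occur in the $\beta$-part, and (ii) the admissible subsets are precisely the ``staircase prefixes'' described in \eqref{eq:dp1p}, with the inequality constraints among the $r_u$'s and $t_{u,\cdot}$'s correctly pinned down. Both points are combinatorial verifications about how products of transpositions $(t,j_u)$ act on $w_{k+1}$ and about length changes, but keeping track of which entries lie in the ``top block'' $\{n-k+1,\dots,n+1\}$ versus elsewhere — and hence whether a given swap is length-increasing — requires care, especially at the boundaries between blocks. Once the single-block analysis of cases (1a)--(1c) is established and the inductive step is set up cleanly, part (2) is immediate and part (1) is the observation that the path in \eqref{eq:dp1p} exists (is a valid directed path) for \emph{any} choice of the parameters subject to the stated inequalities, which follows from the same length-increase argument applied in the forward direction.
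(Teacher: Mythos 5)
Your plan is essentially the paper's own argument: the paper ``proves'' Lemma~\ref{lem:bpart} only by working out the first blocks explicitly (cases (1a)--(1c)) using the one-line notation of $w_{k+1}$ and the explicit $\beta$-sequence of Remark~\ref{rem:beta}, and then invoking ``repeating the argument above''; your block-by-block induction with the same bookkeeping is the same route. Two imprecisions should be repaired. First, your justification that every edge in the $\beta$-part is a Bruhat edge (``an inversion-creating move'') only rules out quantum edges, since a quantum edge labeled by $\alpha\in\Delta^{+}$ forces $\ell(xs_{\alpha})=\ell(x)+1-2\pair{\rho}{\alpha^{\vee}}\le\ell(x)-1$; for the edge to exist in $\QBG(W)$ at all you must verify the covering condition $\ell(xs_{\alpha})=\ell(x)+1$, i.e.\ that no position strictly between $t$ and $j_{u}$ carries an intermediate value, and it is precisely this check that produces the ``consecutive prefix'' structure --- so the Bruhat-edge claim and the prefix classification are one computation, not two separate steps (the same applies to your one-line justification of part (1)). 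Second, your matching condition is misstated: the constraint is that the \emph{smallest} position used in block $u$, namely $t_{u,m_{u}-r_{u}+1}$ (the last transposition applied there), be at least the \emph{smallest} position used in block $u-1$, namely $t_{u-1,m_{u-1}-r_{u-1}+1}$ --- not at least the first (largest) one used in block $u-1$. Comparing with the first transposition of the previous block would wrongly exclude valid paths; e.g.\ for $J=\{3,5\}$ with both blocks used in full, the path through $(2,3),(1,3),(4,5),(2,5),(1,5)$ consists entirely of Bruhat covers and satisfies $t_{2,1}=1\ge t_{1,1}=1$, but fails your version $1\ge t_{1,m_{1}}=2$. With these two points corrected, the plan matches the paper's proof.
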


Let $\bp \in \bD_{J}$. We can deduce the following:
\begin{itemize}
\item $\bp \in \bD_{J}^{\RA}$ if and only if 
$J \ne \emptyset$, with $j_{1} = \min J \ge 2$ (and hence $1 \notin J$), 
and the $\beta$-part $\bp_{\beta}$ is of the form: 
\begin{align}
\bp_{\beta} & = [ (j_{1}-1,j_{1}), (j_{1}-2,j_{1}),\dots,(2,j_{1}),(1,j_{1}), \nonumber \\
& \underbrace{%
  (t_{2,m_{2}},j_{2}),\dots,(t_{2,m_{2}-r_{2}+1},j_{2}), \dots\dots, 
  (t_{p,m_{p}},j_{p}),\dots,(t_{p,m_{p}-r_{p}+1},j_{p})}_{=:\bs}] \label{eq:pbA}
\end{align}
for some $r_{2},\dots,r_{p} \ge 0$ and 
$t_{p,m_{p}-r_{p}+1} \ge \cdots \ge t_{2,m_{2}-r_{2}+1} \ge 1$; 
note that $\ed(\bp_{\beta})^{-1}(1) = j_{1} \in J$ since 
$(j_{1},j_{r})$ does not appear in $\bs$ for any $2 \le r \le p$. 

\item $\bp \in \bD_{J}^{\RB}$ if and only if $1 \in J$; note that 
$j_{1}=\min J$ is equal to $1$, and 
the $\beta$-part $\bp_{\beta}$ is of the form: 
\begin{align}
\bp_{\beta} & = [ 
  (t_{2,m_{2}},j_{2}),\dots,(t_{2,m_{2}-r_{2}+1},j_{2}), \nonumber \\
  & \qquad \dots\dots, 
  (t_{p,m_{p}},j_{p}),\dots,(t_{p,m_{p}-r_{p}+1},j_{p})] \label{eq:pbB}
\end{align}
for some $r_{2},\dots,r_{p} \ge 0$ and 
$t_{p,m_{p}-r_{p}+1} \ge \cdots \ge t_{2,m_{2}-r_{2}+1} \ge 2$. 

\item $\bp \in \bD_{J}^{\RC}$ if and only if $1 \not\in J$ (and hence 
$j_{1}=\min J \ge 2$ if $J \ne \emptyset$), and 
the $\beta$-part $\bp_{\beta}$ is of the form: 
\begin{align}
\bp_{\beta} & = [ (j_{1}-1,j_{1}), (j_{1}-2,j_{1}),\dots,(j_{1}-r_{1}+1,j_{1}),(j_{1}-r_{1},j_{1}), \nonumber \\
& (t_{2,m_{2}},j_{2}),\dots,(t_{2,m_{2}-r_{2}+1},j_{2}), \dots\dots, 
  (t_{p,m_{p}},j_{p}),\dots,(t_{p,m_{p}-r_{p}+1},j_{p})] \label{eq:pbC}
\end{align}
for some $r_{1},r_{2},\dots,r_{p} \ge 0$ and 
$t_{p,m_{p}-r_{p}+1} \ge \cdots \ge t_{2,m_{2}-r_{2}+1} \ge j_{1} - r_{1} \ge 2$; 
note that $\ed(\bp_{\beta})^{-1}(1) = 1 \notin J$. 
\end{itemize}

%%%%%%%%%%%%%%%%%%%%%%%%%%%%%%%%%%%%%%%%%%%%%%%%%%%%%%%%%
\paragraph{Computation of the ``$\gamma$-part'' of $\bp \in \bD_{J}^{\RA}$.}
%%%%%%%%%%%%%%%%%%%%%%%%%%%%%%%%%%%%%%%%%%%%%%%%%%%%%%%%%
%
Let $J = \bigl\{ j_{1},j_{2},\dots,j_{p} \bigr\} \subset [k]$, 
with $1 \le j_{1} < j_{2} < \cdots < j_{p} \le k$, as above. 
Let $\bp \in \bD_{J}^{\RA} \sqcup \bD_{J}^{\RB}$, and assume that 
the $\gamma$-part $\bp_{\gamma}$ of $\bp$ is of the form:
\begin{equation*}
\begin{split}
\ed(\bp_{\beta}) & 
\edge{(j_{1},k_{r})} \bullet \edge{(j_{1},k_{r-1})} \cdots 
\edge{(j_{1},k_{2})} \bullet \edge{(j_{1},k_{1})} \bullet \\
& \cdots \cdots, 
\end{split}
\end{equation*}
where $r \ge 0$, and 
$n+1 \ge k_{r} > k_{r-1} > \cdots > k_{1} \ge j_{1}+1$ 
with $k_{u} \notin \{j_{2},\dots,j_{p}\}$ for any $1 \le u \le r$. 
%
%%%%%%%%%%%%%%%%%
%%% lem:gamma %%%
%%%%%%%%%%%%%%%%%
%
\begin{lem} \label{lem:gamma}
Keep the notation and setting above. 
Either of the following holds\,{\rm:} {\rm (i)} $r = 0$, or 
{\rm (ii)} $r = 1$ and $k_{1}=j_{1}+1$\,{\rm;} 
note that {\rm (ii)} does not occur if $j_{1}+1 \in J$. 
In case {\rm (ii)}, the edge $\ed(\bp_{\beta}) 
\edge{(j_{1},j_{1}+1)} \bullet$ is a Bruhat edge. 
\end{lem}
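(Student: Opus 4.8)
The plan is to prove Lemma~\ref{lem:gamma} by examining closely the combinatorics of the $\gamma$-chain $(\gamma_1,\dots,\gamma_b)$ computed above, together with the quantum Bruhat graph conditions on the edges labelled by the roots $\gamma_{u,t} = \eps_{j_u} - \eps_{t+1+\sigma(u,t)}$. The key starting observation is that, for $\bp \in \bD_J^{\RA} \sqcup \bD_J^{\RB}$, the $\beta$-part $\bp_\beta$ ends at an element $\ed(\bp_\beta)$ which, by the analysis of \eqref{eq:pbA} and \eqref{eq:pbB}, sends $j_1$ to $1$, i.e.\ $\ed(\bp_\beta)(j_1) = 1$ (equivalently $\ed(\bp_\beta)^{-1}(1) = j_1 \in J$). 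This is the crucial feature: applying a reflection $s_{(j_1,k)}$ to an element $x$ with $x(j_1) = 1$ will try to move the value $1$ out of position $j_1$, and since $1$ is the smallest possible value, moving it to a position $k > j_1$ forces $x(k) > 1 = x(j_1)$, so $x s_{(j_1,k)}$ has $(x s_{(j_1,k)})(j_1) = x(k) > x(j_1)$; whether the resulting edge is Bruhat or quantum, and whether further such edges can be composed, is then tightly constrained.

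First I would record precisely which $\gamma$'s in the chain $\Gamma_J$ have the form $\eps_{j_1} - \eps_\star$: by Remark~\ref{rem:gamma} with $u=1$, these are exactly $\gamma_{1,n-p+1},\dots,\gamma_{1,j_1+1},\gamma_{1,j_1}$, i.e.\ the roots $\eps_{j_1} - \eps_{t}$ as $t$ ranges over $\{j_1+1,\dots,n+1\}\setminus\{j_2,\dots,j_p\}$ in decreasing order. Moreover these occur \emph{first} among the $\gamma$-part (since $u=1$ gives the smallest block of indices $\gamma_1,\dots$), so any prefix of $\bp_\gamma$ using only edges labelled $(j_1,k)$ must use an initial segment of this decreasing list; hence if $r \geq 1$ the first such edge is forced to be $\ed(\bp_\beta) \edge{(j_1,k_r)} \bullet$ with $k_r$ the \emph{largest} element of $\{j_1+1,\dots,n+1\}\setminus\{j_2,\dots,j_p\}$ — wait, the indexing in the lemma statement has the path written with $k_r > \cdots > k_1$, so actually the edges are traversed in the order $(j_1,k_r), (j_1,k_{r-1}),\dots$; I would double-check the orientation against \eqref{eq:bp}, but in any case the combinatorial point is that consecutive $\gamma_{1,t}$-edges act by successively swapping position $j_1$ with a strictly decreasing sequence of column indices.

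Next I would analyze a single edge $x \edge{(j_1,k)} x s_{(j_1,k)}$ where $x(j_1) = 1$ and $k > j_1$. Since the value at $j_1$ is $1$, which is minimal, for ANY $k > j_1$ we have $x(k) \geq 2 > 1$, so in one-line notation the pair $(j_1, k)$ is a non-inversion of $x$ (values increasing), hence $\ell(x s_{(j_1,k)}) > \ell(x)$ is \emph{not} automatic — rather, this is precisely the situation where the edge, if present in $\QBG(W)$, could be a quantum edge. The key quantitative check: $(j_1,k)$ corresponds to the positive root $\alpha_{j_1,k-1} = \eps_{j_1} - \eps_k$, and the edge $x \to x s_{\eps_{j_1}-\eps_k}$ is in $\QBG(W)$ iff either $\ell$ increases by $1$ (Bruhat) or decreases appropriately (quantum). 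I would show: (a) the Bruhat case forces $k = j_1+1$ and $x(j_1+1) = 2$, i.e.\ $x$ has $1,2$ in consecutive positions $j_1, j_1+1$, giving case (ii); (b) any quantum edge $x \edge{(j_1,k)} x s_{(j_1,k)}$ here would have to be followed/preceded in a way incompatible with $\bp$ being a valid path in $\bD_J$ — more precisely, after such a move the value at $j_1$ becomes $x(k) \geq 2$, and then no further $\gamma_{1,\star}$-edge (which all have the form $(j_1,\cdot)$) can be a valid $\QBG$-edge continuing the decreasing pattern, because the relevant length/root conditions fail. This forces $r \leq 1$, and when $r = 1$ forces $k_1 = j_1+1$ with a Bruhat edge; and if $j_1 + 1 \in J$ then $j_1+1 \in \{j_2,\dots,j_p\}$ is excluded from the allowed column indices $k_u$ by hypothesis, so $r=0$.

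The main obstacle I anticipate is verifying claim (b) rigorously: ruling out quantum edges of the form $\ed(\bp_\beta)\edge{(j_1,k)}\bullet$ and, more subtly, ruling out length-$r \geq 2$ chains of such edges. This requires a careful case analysis of the length function (equivalently, inversion count) of elements of $S_{n+1}$ under successive transpositions with a fixed left coordinate $j_1$, using that $\ed(\bp_\beta)$ has the very specific values prescribed by \eqref{eq:pbA}–\eqref{eq:pbB} on positions $\{j_1, j_2, \dots, j_p\}$ and on $\{1,\dots,j_1-1\}$. I would handle it by writing $\ed(\bp_\beta)$ explicitly in one-line notation (tracking where each $\eps_i$ goes under the reduced word for $\bp_\beta$), computing $\langle \ed(\bp_\beta)\eps_J, \alpha^\vee\rangle$-type pairings or directly counting inversions created by $s_{(j_1,k)}$, and checking the defining conditions (B)/(Q) of Definition~\ref{dfn:QBG} against $2\langle \rho, \alpha^\vee\rangle = 2(k - j_1)$ for $\alpha = \eps_{j_1}-\eps_k$. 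The subcase $r \geq 2$ should collapse quickly once one observes that after the first $(j_1,k)$-move the position $j_1$ no longer holds the minimal value $1$, destroying the structural property that made even a single further such edge possible.
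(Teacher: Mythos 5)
Your proposal rests on the premise that $\ed(\bp_{\beta})(j_{1})=1$, i.e.\ that position $j_{1}$ carries the globally minimal value. This is false: the $\beta$-part only permutes positions $1,\dots,k$, where the values of $w_{k+1}$ are $n-k+1,\dots,n$, so in fact $\ed(\bp_{\beta})(j_{1})=w_{k+1}(1)=n-k+1$, which is minimal only among the positions $1,\dots,k+1$; the positions $k+2,\dots,n+1$ carry the strictly smaller values $n-k,\dots,1$. (The condition $\ed(\bp_{\beta})^{-1}(1)=j_{1}$ in the definition of $\bD_{J}^{\RA}$ refers to the position sent to $1$ by $w_{k+1}^{-1}\ed(\bp_{\beta})$, not to the position of the value $1$ in the one-line notation of $\ed(\bp_{\beta})$.) This error is not cosmetic: it leads you to conclude that every candidate edge $(j_{1},k_{r})$ is value-increasing, whereas for $k_{r}\ge k+2$ one has $\ed(\bp_{\beta})(k_{r})=n-k_{r}+2<n-k+1=\ed(\bp_{\beta})(j_{1})$, so the candidate edge is value-\emph{decreasing} and hence a potential \emph{quantum} edge. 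The actual proof must, and does, dispose of exactly this case first, by observing that the intermediate position $k+1$ carries the value $n+1$, which violates the type-$A$ criterion for a quantum edge (all intermediate positions must carry values strictly between the two swapped ones). Your argument never sees this case. Relatedly, you have the Bruhat/quantum dichotomy backwards: if $x(i)<x(j)$ then $\ell(xs_{(i,j)})>\ell(x)$ automatically, so such an edge can only be a Bruhat edge, never a quantum one; quantum edges require $x(i)>x(j)$.

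The remaining steps are also not carried out. Even granting that the first label $(j_{1},k_{r})$ is value-increasing (which holds precisely when $k_{r}\le k+1$), a Bruhat edge with $k_{r}>j_{1}+1$ is not excluded merely because $x(j_{1})$ is small --- for instance $[1,3,2]\rightarrow[2,3,1]$ is a Bruhat edge labelled $(1,3)$ --- so your claim that ``the Bruhat case forces $k=j_{1}+1$'' needs the specific structural facts $\ed(\bp_{\beta})(j_{1})<\ed(\bp_{\beta})(j_{1}+1)<\ed(\bp_{\beta})(k_{r})$ when $j_{1}+1\notin J$, and $\ed(\bp_{\beta})(j_{1})<\ed(\bp_{\beta})(j_{2})<\ed(\bp_{\beta})(k_{r})$ when $j_{1}+1=j_{2}\in J$ (this is how one sees that case (ii) cannot occur for $j_{1}+1\in J$). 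These verifications are exactly the content of the proof, and you defer them to ``a careful case analysis'' without performing it. Finally, the sub-case $r\ge 2$, which you flag as the main anticipated obstacle, needs no separate treatment: since $k_{r}$ is the largest of $k_{1}<\dots<k_{r}$ and all $k_{u}\ge j_{1}+1$, forcing $k_{r}=j_{1}+1$ immediately gives $r=1$.
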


\begin{proof}
Assume that $r > 0$. 
Note that $\ed(\bp_{\beta})(j_{1}) = w_{k+1}(1) = n-k+1$. 
If $k_{r} \ge k+2$, then 
\begin{equation*}
\ed(\bp_{\beta})(k_{r}) = 
w_{k+1}(k_{r}) = n-k_{r}+2 < n-k+1 = \ed(\bp_{\beta})(j_{1}).
\end{equation*}
However, $j_{1} < k+1 < k_{r}$, and 
$\ed(\bp_{\beta})(k+1) = n+1$ is greater than 
both $\ed(\bp_{\beta})(k_{r})$ and $\ed(\bp_{\beta})(j_{1})$, 
which is a contradiction. Hence we get $k_{r} \le k+1$. 

Suppose, for a contradiction, that $j_{1}+1=j_{2} \in J$; 
note that $j_{2} < k_{r} \le k+1$ and $k_{r} \notin \{j_{3},\dots,j_{p}\}$. 
In this case, it follows that 
\begin{equation*}
\ed(\bp_{\beta})(j_{2}) = w_{k+1}(j_{2}) = n-k+j_{2} > n-k+1 = \ed(\bp_{\beta})(j_{1}). 
\end{equation*}
Since $k_{r} > j_{2}$, we see that 
$\ed(\bp_{\beta})(k_{r}) = w_{k+1}(t)=n-k+t$ for some $j_{2} < t \le k+1$. 
Hence we obtain
\begin{equation*}
\ed(\bp_{\beta})(j_{1}) < \ed(\bp_{\beta})(j_{2}) < \ed(\bp_{\beta})(k_{r}),
\end{equation*}
which is a contradiction. 

Suppose, for a contradiction, that $j_{1}+1 \notin J$ and $j_{1}+1 < k_{r} \le k+1$; 
recall that $k_{r} \notin \{j_{2},\dots,j_{p}\}$. Also, in this case, 
we see by \eqref{eq:pbA} that 
\begin{equation*}
\ed(\bp_{\beta})(j_{1})=1 < \ed(\bp_{\beta})(j_{1}+1) < \ed(\bp_{\beta})(k_{r}), 
\end{equation*}
which is a contradiction. Hence we get $k_{r}=j_{1}+1$; 
since $\ed(\bp_{\beta})(j_{1})=1 < \ed(\bp_{\beta})(j_{1}+1)$, 
the edge $\ed(\bp_{\beta}) 
\edge{(j_{1},j_{1}+1)} \bullet$ is a Bruhat edge. 
This proves the lemma. 
\end{proof}

Let $\bD_{J}^{\RA_1}$ (resp., $\bD_{J}^{\RA_2}$) denote 
the subset of $\bD_{J}^{\RA}$ consisting of all $\bp \in \bD_{J}^{\RA}$ 
whose initial label of the $\gamma$-part $\bp_{\gamma}$ is (resp., is not) 
$(j_{1},j_{1}+1)$. 
% Note that $\bD_{J}^{\RA_1} = \emptyset$ if and only if $j_{1}+1 \in J$. 
Similarly, let $\bD_{J}^{\RB_1}$ (resp., $\bD_{J}^{\RB_2}$) denote 
the subset of $\bD_{J}^{\RB}$ consisting of all $\bp \in \bD_{J}^{\RB}$ 
whose initial label of the $\gamma$-part $\bp_{\gamma}$ is (resp., is not) $(1,2)$. 
% Note that $\bD_{J}^{\RB_1} = \emptyset$ if and only if $2 \in J$. 

We can easily show the following.
%
%%%%%%%%%%%%%
%%% lem:q %%%
%%%%%%%%%%%%%
%
\begin{lem} \label{lem:q}
We have 
\begin{equation}
\bq:=[(k-1,k),(k-2,k),\dots,(1,k) \mid (k,k+1)] \in 
\bD_{\{k\}}^{\RA_1}; 
\end{equation}
note that $\{k\} \subset [k]$ is a unique subset $J \subset [k]$ such that 
$\bq \in \bD_{J}^{\RA_1}$. Also, we have 
\begin{equation}
\ed(\bq_{\beta})\eps_{J} = \eps_{n+1-k}, \quad 
\ell(\bp_{\gamma})=1, \quad 
\ed(\bp)t_{\wt(\bp)} = w_{k}.
\end{equation}
\end{lem}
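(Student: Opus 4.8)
The plan is to verify the three claims by direct computation, treating this as a concrete check for the singleton set $J = \{k\}$ (so $p = 1$, $j_{1} = k$). First I would unwind the definitions of $\Gamma_{J}$, $\bD_{J}$, $\bD_{J}^{\RA_{1}}$ in this special case. With $J = \{k\}$, the $\beta$-part sequence $(\beta_{a}, \dots, \beta_{1})$ from Remark~\ref{rem:beta} collapses to the single block $(\beta_{1, k-1}, \dots, \beta_{1,1}) = (\eps_{k-1} - \eps_{k}, \eps_{k-2} - \eps_{k}, \dots, \eps_{1} - \eps_{k})$, i.e. the transpositions $(k-1,k), (k-2,k), \dots, (1,k)$; the $\gamma$-part sequence from Remark~\ref{rem:gamma} is $(\gamma_{1, n-p+1}, \dots, \gamma_{1, k}) = (\eps_{k} - \eps_{n+1}, \eps_{k} - \eps_{n}, \dots, \eps_{k} - \eps_{k+1})$, i.e. the transpositions $(k, n+1), (k, n), \dots, (k, k+1)$. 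So $\bq$ takes the full $\beta$-part and the single last $\gamma$-label $(k, k+1)$.

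Next I would check that $\bq$ is genuinely a directed path in $\QBG(W)$, i.e. an element of $\bD_{\{k\}}$. For the $\beta$-part this follows from Lemma~\ref{lem:bpart}(1) with $r_{1} = k - 1$ (the full block), which guarantees all those edges are Bruhat edges; concretely, multiplying $w_{k+1}$ on the right by $(k-1,k)(k-2,k)\cdots(1,k)$ cycles the values, and one checks lengths increase by $1$ at each step. For the $\gamma$-edge $(k, k+1)$, by Lemma~\ref{lem:gamma}(ii) (case $r = 1$, $k_{1} = j_{1} + 1 = k+1$) this is a Bruhat edge, since $\ed(\bq_{\beta})(k) = w_{k+1}(1) = n - k + 1 < n + 1 = w_{k+1}(k+1) = \ed(\bq_{\beta})(k+1)$ — here I use that right-multiplication by the $\beta$-block moves the value at position $k$ to the value originally at position $1$. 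This simultaneously shows $\bq \in \bD_{\{k\}}^{\RA_{1}}$: the end of the $\beta$-part satisfies $\ed(\bq_{\beta})^{-1}(1) = k = j_{1} \in J$ with $k \neq 1$ (assuming $k \geq 2$; the case $k = 1$ must be handled separately, where $\bq \in \bD_{\{1\}}^{\RB_{1}}$ instead, or is excluded), and the initial $\gamma$-label is exactly $(j_{1}, j_{1}+1) = (k, k+1)$. Uniqueness of $J$ with $\bq \in \bD_{J}^{\RA_{1}}$ is immediate since the labels appearing in $\bq$ determine which index plays the role of $\min J$ and there is only one block.

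Finally I would compute the three quantities. For $\ed(\bq_{\beta})\eps_{J} = \ed(\bq_{\beta})\eps_{k}$: applying the permutation $\ed(\bq_{\beta}) = w_{k+1} \cdot (k-1,k)\cdots(1,k)$ to $\eps_{k}$, one tracks that this element sends $k \mapsto$ (the position whose $w_{k+1}$-value is $n+1-k$), giving $\ed(\bq_{\beta})\eps_{k} = \eps_{n+1-k}$. For $\ell(\bq_{\gamma}) = 1$: the $\gamma$-part consists of the single edge labelled $(k,k+1)$, so $t = 1$ by definition of $\ell(\bp_{\gamma})$. For $\ed(\bq) t_{\wt(\bq)} = w_{k}$: since the single $\gamma$-edge is a Bruhat edge, $A^{-} = \emptyset$ for the corresponding admissible subset, so $\dnn = 0$ and $\wt(\bq) = 0$; thus I need $\ed(\bq) = w_{k}$, i.e. $w_{k+1} \cdot (k-1,k)(k-2,k)\cdots(1,k)(k,k+1) = w_{k}$, which I would verify by composing permutations in one-line notation and comparing against the explicit matrix form of $w_{k}$ given in Section~\ref{subsec:expr}. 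The only mild subtlety — and the step most prone to bookkeeping error — is getting the direction of composition and the indexing of the cycle $(k-1,k)\cdots(1,k)$ right relative to the convention $w \eps_{i} = \eps_{w(i)}$; everything else is routine substitution.
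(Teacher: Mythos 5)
Your verification is correct and is exactly the direct computation the paper has in mind (the paper omits the proof entirely with ``We can easily show the following''); you correctly identify $\ed(\bq_{\beta})=w_{k+1}s_{(k-1,k)}\cdots s_{(1,k)}$, that all edges are Bruhat so $\wt(\bq)=0$, that $\ed(\bq)=w_{k}$, and you rightly flag the $k=1$ edge case where $\bq$ lands in $\bD_{\{1\}}^{\RB_{1}}$ rather than $\bD_{\{1\}}^{\RA_{1}}$ (consistent with the paper's later use of $\bD_{\{k\}}^{\RA_1}\sqcup\bD_{\{k\}}^{\RB_1}=\{\bq\}$). The only blemish is the garbled parenthetical in your computation of $\ed(\bq_{\beta})\eps_{k}$: the clean statement is simply $\ed(\bq_{\beta})(k)=w_{k+1}(1)=n+1-k$, whence $\ed(\bq_{\beta})\eps_{k}=\eps_{n+1-k}$.
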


In the following, we will define a ``sijection'' (i.e., bijection between signed sets) $\Phi:\bD \rightarrow \bD$. 
We set $\Phi(\bq):=\bq$ (see Lemma~\ref{lem:q}). Let $\bp \in \bD_{J}^{\RA_2}$; 
note that $j_{1}=\min J \ge 2$ and $j_{1}-1 \notin J$. We set 
$\Phi(J):=(J \setminus \{j_{1}\}) \sqcup \{j_{1}-1\}$, and then define 
$\Phi(\bp) \in \bD_{\Phi(J)}^{\RA_1} \sqcup \bD_{\Phi(J)}^{\RB_1}$ as follows. 
Recall that the $\beta$-part $\bp_{\beta}$ of $\bp$ is of the form \eqref{eq:pbA}, 
and the $\gamma$-part $\bp_{\gamma}$ of $\bp$ starts at the edge whose label is 
$(j_{u},t)$ for some $2 \le u \le p$ and $j_{u} \le t \le n+1$, with 
$t \notin \{j_{u+1},\dots,j_{p}\}$ (if $\bp_{\gamma}$ is nontrivial). 
We define $\Phi(\bp)$ by: 
% removing $(j_{1}-1,j_{1})$ from $\bp_{\beta}$, 
% replacing all $j_{1}$ appearing in $\bp_{\beta}$ with $j_{1}-1$, 
% replacing all $j_{1}$ appearing in $\bp_{\beta}$ with $j_{1}-1$, 
% and then adding $(j_{1}-1,j_{1})$ at the beginning of $\bp_{\gamma}$: 
%
\begin{equation} \label{eq:PhiA1}
\Phi(\bp) = [ (j_{1}-2,j_{1}-1),\dots,(2,j_{1}-1),(1,j_{1}-1), \bs' \mid 
(j_{1}-1,j_{1}), \bp_{\gamma} ], 
\end{equation}
where $\bs'$ is obtained from $\bs$ by replacing $(j_{1}-1,j_{u})$ 
appearing in $\bp_{\beta}$ with $(j_{1},j_{u})$ for each $2 \le u \le p$.
By Lemma~\ref{lem:bpart}, it is easily verified that 
$\Phi(\bp) \in \bD_{\Phi(J)}^{\RA_1} \sqcup \bD_{\Phi(J)}^{\RB_1}$ 
(note that $\Phi(\bp) \in \bD_{\Phi(J)}^{\RB_1}$ if and only if $j_{1}=2$). 
Also, it follows that 
\begin{equation} \label{eq:PhiA1a}
\begin{split}
& |\Phi(J)|=|J|, \\
& |\Phi(J)|\eps_{n+1-k}-\ed(\Phi(\bp)_{\beta})\eps_{\Phi(J)} =|J|\eps_{n+1-k}-\ed(\bp_{\beta})\eps_{J}, \\
& \ell(\Phi(\bp)_{\gamma})=\ell(\bp_{\gamma})+1, \quad 
  \ed(\Phi(\bp))t_{\wt(\Phi(\bp))} = \ed(\bp)t_{\wt(\bp)}.
\end{split}
\end{equation}

Let $\bp \in \bD_{J}^{\RA_1} \sqcup \bD_{J}^{\RB_1}$, with $J \ne \{k\}$; 
recall that $\bD_{ \{ k \} }^{\RA_1} \sqcup \bD_{ \{ k \} }^{\RB_1} = \{\bq\}$, and note that $j_{1}+1 \notin [k] \setminus J$. 
We set $\Phi(J):= (J \setminus \{j_{1}\}) \sqcup \{j_{1}+1\}$, and then 
define $\Phi(\bp) \in \bD_{\Phi(J)}^{\RA_1}$ as follows. 
Recall that the $\beta$-part $\bp_{\beta}$ of $\bp$ 
is of the form \eqref{eq:pbA} or \eqref{eq:pbB}, 
and the $\gamma$-part $\bp_{\gamma}$ of $\bp$ 
starts at the edge whose label is $(j_{1},j_{1}+1)$. 
We define $\Phi(\bp)$ by: 
\begin{equation} \label{eq:PhiB1}
\begin{split}
\Phi(\bp) & = [ (j_{1},j_{1}+1), (j_{1}-1,j_{1}+1),\dots,(2,j_{1}+1),(1,j_{1}+1), \bs'' \\
& \hspace{70mm} \mid \bp_{\gamma} \setminus (j_{1},j_{1}+1) ], 
\end{split}
\end{equation}
where $\bs''$ is obtained from $\bs$ by replacing 
$(j_{1}+1,j_{u})$ appearing in $\bp_{\beta}$ with $(j_{1},j_{u})$ 
for each $2 \le u \le p$, and $\bp_{\gamma} \setminus (j_{1},j_{1}+1)$ 
is the sequence obtained form $\bp_{\gamma}$ by removing the initial label 
$(j_{1},j_{1}+1)$. By Lemma~\ref{lem:bpart}, it is easily verified that 
$\Phi(\bp) \in \bD_{\Phi(J)}^{\RA_2}$. Also, it follows that 
\begin{equation} \label{eq:PhiB1a}
\begin{split}
& |\Phi(J)|=|J|, \\
& |\Phi(J)|\eps_{n+1-k}-\ed(\Phi(\bp)_{\beta})\eps_{\Phi(J)} = 
  |J|\eps_{n+1-k}-\ed(\bp_{\beta})\eps_{J}, \\
& \ell(\Phi(\bp)_{\gamma})=\ell(\bp_{\gamma})-1, \quad 
  \ed(\Phi(\bp))t_{\wt(\Phi(\bp))} = \ed(\bp)t_{\wt(\bp)}.
\end{split}
\end{equation}

If $\bp \in \bD_{J}^{\RB_2}$, then $\bp \in \bD_{\Phi(J)}^{\RC}$ 
with $\Phi(J):=J \setminus \{1\}$. We define $\Phi(\bp):=\bp \in 
\bD_{\Phi(J)}^{\RC}$ for $\bp \in \bD_{J}^{\RB_2}$. It follows that 
\begin{equation} \label{eq:PhiC1}
\begin{split}
& |\Phi(J)|=|J|-1, \\
& |\Phi(J)|\eps_{n+1-k}-\ed(\Phi(\bp)_{\beta})\eps_{\Phi(J)} = 
  |J|\eps_{n+1-k}-\ed(\bp_{\beta})\eps_{J}, \\
& \ell(\Phi(\bp)_{\gamma})=\ell(\bp_{\gamma}), \qquad 
  \ed(\Phi(\bp))t_{\wt(\Phi(\bp))} = \ed(\bp)t_{\wt(\bp)}.
\end{split}
\end{equation}
Similarly, if $\bp \in \bD_{J}^{\RC}$, then $\bp \in \bD_{\Phi(J)}^{\RB_2}$ 
with $\Phi(J):=J \sqcup \{1\}$. We define $\Phi(\bp):=\bp \in 
\bD_{\Phi(J)}^{\RB_2}$ for $\bp \in \bD_{J}^{\RC}$. It follows that 
\begin{equation} \label{eq:PhiC2}
\begin{split}
& |\Phi(J)|=|J|+1, \\
& |\Phi(J)|\eps_{n+1-k}-\ed(\Phi(\bp)_{\beta})\eps_{\Phi(J)} =
  |J|\eps_{n+1-k}-\ed(\bp_{\beta})\eps_{J}, \\
& \ell(\Phi(\bp)_{\gamma})=\ell(\bp_{\gamma}), \qquad 
  \ed(\Phi(\bp))t_{\wt(\Phi(\bp))} = \ed(\bp)t_{\wt(\bp)}.
\end{split}
\end{equation}

Thus, we have obtained a bijection $\Phi:\bD \rightarrow \bD$ satisfying 
(i) $\Phi(\bq)=\bq$, (ii) 
$\Phi(\bp) \ne \bp$, $\Phi(\Phi(\bp))=\bp$, and 
\begin{equation*}
\begin{split}
& \be^{|\Phi(J)| \eps_{n+1-k}-\ed(\Phi(\bp)_{\beta})\eps_{\Phi(J)}}
  (-1)^{|\Phi(J)|+\ell(\Phi(\bp)_{\gamma})} 
  [\CO_{\QG(\ed(\Phi(\bp))t_{\wt(\Phi(\bp))})}] \\
& = - \be^{|J| \eps_{n+1-k}-\ed(\bp_{\beta})\eps_{J}}
 (-1)^{|J|+\ell(\bp_{\gamma})} 
 [\CO_{\QG(\ed(\bp)t_{\wt(\bp)})}]
\end{split}
\end{equation*}
for all $\bp \in \bD_{J} \setminus \{\bq\}$ with $J \subset [k]$. 
Therefore, the right-hand side of equation \eqref{eq:wk2} becomes 
\begin{align*}
& \be^{|\{k\}| \eps_{n+1-k}-\ed(\bq_{\beta})\eps_{\{k\}}}
 (-1)^{|\{k\}|+\ell(\bq_{\gamma})} 
 [\CO_{\QG(\ed(\bq)t_{\wt(\bq)})}] \\
& = [\CO_{\QG(w_{k})}] \qquad \text{by Lemma~\ref{lem:q}}, 
\end{align*}
as desired. 
This completes the proof of Proposition~\ref{prop:wk}. 

%=====================%
%     BIBLIOGRAPHY    %
%=====================%

{\small

}

\end{document}